\newtheorem{thm}{Theorem}[section]
\newtheorem{prp}[thm]{Proposition}
\newtheorem{cor}[thm]{Corollary}
\newtheorem{lem}[thm]{Lemma}
\theoremstyle{definition}
\def\Cx{\mathbb{C}}
\def\R{\mathbb{R}}
\def\N{\mathbb{N}}
\def\Q{\mathbb{Q}}
\def\D{\mathbb{D}}
\def\Dd{\mathbb{D}}
\def\B{\mathbb{B}}
\def\F{\mathbb{F}}
\def\Fb{\mathbb{F}}
\def\1{\mathbbm{1}}
\def\Dn{\mathfrak{D}}
\def\P{\mathcal{P}}
\def\C{\mathcal{C}}
\def\V{\mathcal{V}}
\def\CS{\mathcal{CS}}
\newcommand\dint{{\rm d}}
\newcommand{\im}{{\rm i}}
\DeclareMathOperator{\wal}{wal}
\DeclareMathOperator{\dg}{deg}
\DeclareMathOperator{\err}{err}
\DeclareMathOperator{\e}{e}
\begin{document}
\pagestyle{scrheadings}
\onehalfspacing

\newlength{\fixboxwidth}
\setlength{\fixboxwidth}{\marginparwidth}
\addtolength{\fixboxwidth}{-7pt}
\newcommand{\fix}[1]{\marginpar{\fbox{\parbox{\fixboxwidth}{\raggedright\tiny #1}}}}

\title{Quasi-Monte Carlo methods for integration of functions with dominating mixed smoothness in arbitrary dimension}
\author{Lev Markhasin\thanks{Research of the author was supported by a scholarship of Ernst Ludwig Ehrlich Studienwerk.}\\
        \tiny{Friedrich-Schiller-Universit\"at Jena, e-mail: lev.markhasin@uni-jena.de}}
\date{\today}
\maketitle

\begin{abstract}
In a celebrated construction, Chen and Skriganov gave explicit examples of point sets achieving the best possible $L_2$-norm of the discrepancy function. We consider the discrepancy function of the Chen-Skriganov point sets in Besov spaces with dominating mixed smoothness and show that they also achieve the best possible rate in this setting. The proof uses a $b$-adic generalization of the Haar system and corresponding characterizations of the Besov space norm. Results for further function spaces and integration errors are concluded.
\end{abstract}

\noindent{\footnotesize {\it 2010 Mathematics Subject Classification.} Primary 11K06,11K38,42C10,46E35,65C05.\\
{\it Key words and phrases.} discrepancy, Chen-Skriganov point set, dominating mixed smoothness, quasi-Monte Carlo, Haar system, numerical integration.}\\[5mm]
\textit{Acknowledgement:} The author wants to thank Aicke Hinrichs, Hans Triebel and Dmitriy Bilyk.

\section{Introduction}
Let $N$ be some positive integer and $\P$ a point set in the unit cube $[0,1)^d$ with $N$ points. Then the discrepancy function $D_{\P}$ is defined as
\begin{align}
D_{\P}(x) = \frac{1}{N} \sum_{z \in \P} \chi_{[0,x)}(z) - \left| [0,x) \right|.
\end{align}
By $|[0,x)| = x_1 \cdot \ldots \cdot x_d$ we denote the volume of the rectangular box $[0,x) = [0,x_1) \times \ldots \times [0,x_d)$ where $x = (x_1,\ldots,x_d) \in [0,1)^d$ while $\chi_{[0,x)}$ is the characteristic function of $[0,x)$. So the discrepancy function measures the deviation of the number of points of $\P$ in $[0,x)$ from the fair number of points $N |[0,x)|$ which would be achieved by a (practically impossible) perfectly uniform distribution of the points of $\P$, normalized by the total number of points.

Usually one is interested in calculating the norm of the discrepancy function in some Banach space of functions on $[0,1)^d$ to which the discrepancy function belongs. A very well known result refers to the space $L_2([0,1)^d)$. It was proved by Roth in \cite{R54}. There exists a constant $c_1 > 0$ such that, for any $N \geq 1$, the discrepancy function of any point set $\P$ in $[0,1)^d$ with $N$ points satisfies
\[ \left\| D_{\P} | L_2 \right\| \geq c_1 \, \frac{(\log N)^\frac{d-1}{2}}{N}. \]

The currently best known values for the constant $c_1$ can be found in \cite{HM11}. Furthermore, there exists a constant $c_2 > 0$ such that, for any $N \geq 1$, there exists a point set $\P$ in $[0,1)^d$ with $N$ points that satisfies
\[ \left\| D_{\P} | L_2 \right\| \leq c_2 \, \frac{(\log N)^\frac{d-1}{2}}{N}. \]
This result is known for dimension $2$ from \cite{D56} (Davenport), for dimension $3$ from \cite{R79} (Roth) and for arbitrary dimension from \cite{R80} (Roth). Only Davenport's result has been proved by an explicit construction while for higher dimensions probabilistic methods were used until Chen and Skriganov found explicit constructions for arbitrary dimension in \cite{CS02}. Results for the constant $c_2$ can be found in \cite{FPPS10}.

Both bounds were extended to $L_p$-spaces for any $1 < p < \infty$. In the case of the lower bound the reference is \cite{S77} (Schmidt) while for the upper bound it is \cite{C80} (Chen).

As general references for studies of the discrepancy function we refer to the recent monographs \cite{DP10} and \cite{NW10} as well as \cite{M99}, \cite{KN74} and \cite{B11}.

Until recently other norms than $L_p$-norms weren't studied a lot in the context of discrepancy. Triebel started the study of the discrepancy function in other function spaces like Sobolev, Besov and Triebel-Lizorkin spaces with dominating mixed smoothness in \cite{T10b} and \cite{T10a}. In \cite{Hi10} Hinrichs proved sharp upper bounds for the norms in Besov spaces with dominating mixed smoothness in dimension $2$. In \cite{M12a} the author of this work proved these upper bounds for a much larger class of point sets and also for other function spaces with dominating mixed smoothness. Triebel's result was that for all $1 \leq p,q \leq \infty$ and $r \in \R$ satisfying $\frac{1}{p} - 1 < r < \frac{1}{p}$ and $q < \infty$ if $p = 1$ and $q > 1$ if $p = \infty$ there exist constants $c_1, c_2 > 0$ such that, for any $N \geq 2$, the discrepancy function of any point set $\P$ in $[0,1)^d$ with $N$ points satisfies
\[ \left\| D_{\P} | S_{pq}^r B([0,1)^d) \right\| \geq c_1 \, N^{r-1} (\log N)^{\frac{d-1}{q}}, \]
and, for any $N \geq 2$, there exists a point set  $\P$ in $[0,1)^d$ with $N$ points such that
\[ \left\| D_{\P} | S_{pq}^r B([0,1)^d) \right\| \leq c_2 \, N^{r-1} (\log N)^{(d-1)(\frac{1}{q} + 1 - r)}. \]
Hinrichs' result closed this gap for $d = 2$, we will mention it later.

We mention some definitions from \cite{T10a} which are most important for our purpose.

Let $\mathcal{S}(\R^d)$ denote the Schwartz space and $\mathcal{S}'(\R^d)$ the space of tempered distributions on $\R^d$. For $f \in \mathcal{S}'(\R^d)$, we denote by $\mathcal{F}f$ the Fourier transform of $f$. Let $\varphi_0 \in \mathcal{S}(\R^d)$ satisfy $\varphi_0(t) = 1$ for $|t| \leq 1$ and $\varphi_0(t) = 0$ for $|t| > \frac{3}{2}$. Let
\[ \varphi_k(t) = \varphi_0(2^{-k} t) - \varphi_0(2^{-k + 1} t) \]
where $t \in \R, \, k \in \N$ and
\[ \varphi_k(t) = \varphi_{k_1}(t_1) \ldots \varphi_{k_d}(t_d) \]
where $k = (k_1,\ldots,k_d) \in \N_0^d, \, t = (t_1,\ldots,t_d) \in \R^d$.
The functions $\varphi_k$ are a dyadic resolution of unity since
\[ \sum_{k \in \N_0^d} \varphi_k(x) = 1 \]
for all $x \in \R^d$. The functions $\mathcal{F}^{-1}(\varphi_k \mathcal{F} f)$ are entire analytic functions for any $f \in \mathcal{S}'(\R^d)$.

Let $0 < p,q \leq \infty$ and $r \in \R$. The Besov space with dominating mixed smoothness $S_{pq}^r B(\R^d)$ consists of all $f \in \mathcal{S}'(\R^d)$ with finite quasi-norm
\[ \left\| f | S_{pq}^r B(\R^d) \right\| = \left( \sum_{k \in \N_0^d} 2^{r (k_1 + \ldots + k_d) q} \left\| \mathcal{F}^{-1}(\varphi_k \mathcal{F} f) | L_p(\R^d) \right\|^q \right)^{\frac{1}{q}} \]
with the usual modification if $q = \infty$.

Let $0 < p < \infty$, $0 < q \leq \infty$ and $r \in \R$. The Triebel-Lizorkin space with dominating mixed smoothness $S_{pq}^r F(\R^d)$ consists of all $f \in \mathcal{S}'(\R^d)$ with finite quasi-norm
\[ \left\| f | S_{pq}^r F(\R^d) \right\| = \left\| \left( \sum_{k \in \N_0^d} 2^{r (k_1 + \ldots + k_d) q} \left| \mathcal{F}^{-1}(\varphi_k \mathcal{F} f)(\cdot) \right|^q \right)^{\frac{1}{q}} | L_p(\R^d) \right\| \]
with the usual modification if $q = \infty$.

Let $\mathcal{D}([0,1)^d)$ consist of all complex-valued infinitely differentiable functions on $\R^d$ with compact support in the interior of $[0,1)^d$ and let $\mathcal{D}'([0,1)^d)$ be its dual space of all distributions in $[0,1)^d$. The Besov space with dominating mixed smoothness $S_{pq}^r B([0,1)^d)$ consists of all $f \in \mathcal{D}'([0,1)^d)$ with finite quasi-norm
\[ \left\| f | S_{pq}^r B([0,1)^d) \right\| = \inf \left\{ \left\| g | S_{pq}^r B(\R^d) \right\| : \: g \in S_{pq}^r B(\R^d), \: g|_{[0,1)^d} = f \right\}. \]
The Triebel-Lizorkin space with dominating mixed smoothness $S_{pq}^r F([0,1)^d)$ consists of all $f \in \mathcal{D}'([0,1)^d)$ with finite quasi-norm
\[ \left\| f | S_{pq}^r F([0,1)^d) \right\| = \inf \left\{ \left\| g | S_{pq}^r F(\R^d) \right\| : \: g \in S_{pq}^r F(\R^d), \: g|_{[0,1)^d} = f \right\}. \]
The spaces $S_{pq}^r B(\R^d), \, S_{pq}^r F(\R^d), \, S_{pq}^r B([0,1)^d)$ and $S_{pq}^r F([0,1)^d)$ are quasi-Banach spaces. For $1 < p < \infty$ we define the Sobolev space with dominating mixed smoothness as
\[ S_p^r H([0,1)^d) = S_{p2}^r F([0,1)^d). \]
If $r \in \N_0$ then it is denoted by $S_p^r W ([0,1)^d)$ and is called classical Sobolev space with dominating mixed smoothness. An equivalent norm for $S_p^r W([0,1)^d)$ is
\[ \sum_{\alpha \in \N_0^d; \, 0 \leq \alpha_i \leq r} \left\| D^{\alpha} f | L_p([0,1)^d) \right\|. \]
Also we have
\[ S_p^0 H([0,1)^d) = L_p([0,1)^d). \]

In \cite{Hi10} Hinrichs analyzed the norm of the discrepancy function of point sets of the Hammersley type in Besov spaces with dominating mixed smoothness. He proved upper bounds which closed the gap for Triebel's results of discrepancy in $S_{pq}^r B([0,1)^2)$-spaces. The result from \cite{Hi10} is that for $1 \leq p,q \leq \infty$ and $0 \leq r < \frac{1}{p}$ there is a constant $c > 0$ such that for any $N \geq 2$, there exists a point set $\P$ in $[0,1)^2$ with $N$ points such that
\[ \left\| D_{\P} | S_{pq}^r B([0,1)^2) \right\| \leq c \, N^{r-1} (\log N)^{\frac{1}{q}}. \]
In \cite{M12a} we proved these bounds for generalized Hammersley type point sets in Besov, Triebel-Lizorkin and Sobolev spaces with domintating mixed smoothness. In this note we close the gap of Triebel's result in arbitrary dimension. We use the same constructions which were used by Chen and Skriganov in \cite{CS02} to prove upper bounds for $L_2$-discrepancy. The notation will mostly orientate on \cite{DP10}. The main result of this note is

\begin{thm} \label{thm_main}

Let $1 \leq p,q \leq \infty$ and $0 < r < \frac{1}{p}$. Then there exists a constant $c > 0$ such that, for any $N \geq 2$, there exists a point set $\P \in [0,1)^d$ with $N$ points such that
\[ \left\| D_{\P} | S_{pq}^r B([0,1)^d) \right\| \leq c \, N^{r-1} \, (\log N)^{\frac{d-1}{q}}. \]

\end{thm}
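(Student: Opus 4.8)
The strategy follows the Haar-function approach, which has proven effective for discrepancy estimates in spaces with dominating mixed smoothness. The plan is to: (1) establish a $b$-adic generalization of the Haar system on $[0,1)^d$ together with a characterization of the $S_{pq}^r B$-norm in terms of the associated Haar coefficients (this parallels the classical dyadic case but is needed because the Chen--Skriganov construction lives naturally over a prime base $b$); (2) recall the precise definition of the Chen--Skriganov point sets -- these are digital nets over $\mathbb{F}_b$ obtained from carefully chosen generator matrices, with $N = b^m$ points, enjoying strong equidistribution in $b$-adic elementary intervals up to a certain order; (3) compute or bound the $b$-adic Haar coefficients $\mu_{j,\ell}$ of the discrepancy function $D_{\mathcal P}$, where $j = (j_1,\dots,j_d)$ indexes the level and $\ell$ the position; and (4) plug these coefficient bounds into the norm characterization from step (1) and sum the resulting geometric-type series to obtain the claimed bound $c\, N^{r-1}(\log N)^{(d-1)/q}$.

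For step (3), I would split the Haar coefficients according to the size of $|j| = j_1 + \cdots + j_d$ relative to $m = \log_b N$. When $|j|$ is small enough that the supporting $b$-adic box is resolved by the equidistribution properties of the net, the ``counting part'' $\frac{1}{N}\sum_{z\in\mathcal P}\chi_{[0,x)}(z)$ and the ``volume part'' $|[0,x)|$ interact so that the Haar coefficient is governed only by the linear (volume) term, giving coefficients of size roughly $b^{-|j|}$ times a factor counting how many $\ell$ are nonzero (which is $O(1)$ per coordinate, with the number of active coordinates controlled). When $|j|$ is large, one uses the trivial bound that each Haar coefficient is at most $O(b^{-|j|})$ in a suitable normalization, but now the number of relevant levels and the interaction with the point set must be estimated using the linear independence / duality properties of the Chen--Skriganov generator matrices -- this is exactly where the combinatorial strength of the construction (and the restriction $r < 1/p$) enters. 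The bookkeeping here is essentially the $d$-dimensional, $b$-adic analogue of the two-dimensional Hammersley estimates in \cite{Hi10} and \cite{M12a}, combined with the higher-dimensional net arguments of \cite{CS02}.

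I expect the main obstacle to be step (3) in the regime $|j| \ge m$: controlling the Haar coefficients of $D_{\mathcal P}$ uniformly in the position parameter $\ell$ requires exploiting the specific algebraic structure of the Chen--Skriganov matrices (their behavior under the relevant duality, which is what gives the optimal $L_2$ bound in \cite{CS02}), and then packaging these pointwise estimates so that the $\ell^q(\ell^p)$-type summation in the Besov norm produces only the power $(d-1)/q$ of $\log N$ rather than a larger power. In particular one must be careful that the sum over the $\binom{|j|+d-1}{d-1}$-many level vectors $j$ with fixed $|j|$, together with the sum over positions, does not inflate the logarithmic exponent; this is handled by absorbing polynomial-in-$|j|$ factors into the geometric decay coming from $r > 0$ on one side and from $r < 1/p$ on the other, so that only the $d-1$ ``free'' directions at the critical level $|j|\approx m$ contribute the $(\log N)^{(d-1)/q}$ factor. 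The remaining steps (1), (2), (4) are then routine: the norm characterization is a standard transference from the known sequence-space description of $S_{pq}^r B$, and the final summation is an elementary, if lengthy, estimate.
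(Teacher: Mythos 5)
Your outline follows the same broad architecture as the paper (a $b$-adic Haar characterization of the $S_{pq}^r B$ norm, the Chen--Skriganov nets, Haar-coefficient bounds, then summation), but the decisive estimate is missing, and your description of where the difficulty sits points in the wrong direction. For levels with $|j| \le n$ (where $N = b^n$) you claim that the equidistribution of the net makes the Haar coefficient ``governed only by the linear (volume) term'', of size roughly $b^{-|j|}$. Neither version of this is strong enough: the volume term alone has coefficients of order $b^{-2|j|}$ (Lemma~\ref{lem_haar_coeff_besov_x}), and feeding either $b^{-|j|}$ or $b^{-2|j|}$ into the Besov summation gives quantities of order $N^{r}(\log N)^{(d-1)/q}$ resp.\ $O(1)$, not $N^{r-1}(\log N)^{(d-1)/q}$. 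What is actually needed in this regime is $|\mu_{jml}| \le c\, b^{-|j|-n}$, i.e.\ an extra factor $N^{-1}$ coming from cancellation between the counting part and the volume part; and this cancellation is \emph{not} a consequence of the $(0,n,d)$-net property alone (for $d\ge 3$ plain nets are not known to achieve the optimal rate -- this is the whole point of the Chen--Skriganov construction). In the paper this bound is proved by splitting $D_{\CS_n} = \Theta_{\CS_n} + R_{\CS_n}$ via the truncated Walsh expansion, writing $\Theta_{\CS_n}$ as a sum of Walsh coefficients over the dual set (Lemma~\ref{theta_lem}, with Lemma~\ref{rest_lem} for the remainder), expanding $h_{jml}$ in a Walsh series and combining the Fine--Price formulas (Lemma~\ref{chi_roof_lem}) with the Haar--Walsh inner-product estimates (Lemmas~\ref{lem_scalprod_1} and~\ref{lem_scalprod_2}), and finally counting the admissible dual-code vectors through the properties $\varkappa_n(\C_n^{\perp}) \ge 2d+1$ and $\delta_n(\C_n^{\perp}) \ge n+1$ (Proposition~\ref{main_est_prp}); the level $j=(-1,\ldots,-1)$ needs a separate argument using the Hamming-weight condition (Proposition~\ref{prp_min1}). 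None of this is supplied or even correctly located by your plan.

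Conversely, the regime $|j| \ge n$ that you single out as the main obstacle is in fact the easy part: there the paper only uses that each box $I_{jm}$ contains at most one point of the net and that all but $b^n$ of them contain none, together with the elementary Lemmas~\ref{lem_haar_coeff_besov_x} and~\ref{lem_haar_coeff_besov_indicator}, to get $|\mu_{jml}|\le c\,b^{-|j|-n}$ for the occupied boxes and $|\mu_{jml}|\le c\,b^{-2|j|}$ for the empty ones; no algebraic duality is needed there. So as it stands the proposal defers exactly the step that carries the mathematical content (the $b^{-|j|-n}$ bound for $|j|\le n$ and the Walsh/duality machinery behind it) and replaces it with an equidistribution heuristic that cannot deliver the required decay; the concluding summation you describe (with $r>0$ and $r<\frac{1}{p}$ absorbing the $(\lambda+1)^{d-1}$ level counts, cf.\ Lemma~\ref{lem_lambda_s_minus_1}) is correct in spirit but only works once those coefficient bounds are in hand.
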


The point sets in the theorem are the Chen-Skriganov point sets. It was conjectured in \cite{Hi10} that they might satisfy the desired upper bound. The restrictions for the parameter $r$ are necessary. The upper bound $r < \frac{1}{p}$ is due to the fact that we need characteristic functions of intervals to belong to $S_{pq}^r B ([0,1)^d)$ and the condition given by \cite[Theorem 6.3]{T10a}. The restriction $r \geq 0$ comes from the point sets. Anyway, there is a restriction of $r > \frac{1}{p} - 1$ from the fact that we require $S_{pq}^r B ([0,1)^d)$ to have a $b$-adic Haar basis. We have an additional restriction $r > 0$ which is due to our estimations which might not be optimal.

In \cite[Remark 6.28]{T10a} and \cite[Proposition 2.3.7]{Hn10} we find the following very practical embeddings. For $0 < p < \infty, \, 0 < q \leq \infty$ and $r \in \R$ we have
\[ S_{p, \min(p,q)}^r B([0,1)^d) \hookrightarrow S_{pq}^r F([0,1)^d) \hookrightarrow S_{p, \max(p,q)}^r B([0,1)^d). \]
For $0 < p_2 \leq q \leq p_1 < \infty$ and $r \in \R$ we have
\[ S_{p_1 q}^r F([0,1)^d) \hookrightarrow S_{qq}^r B([0,1)^d) \hookrightarrow S_{p_2 q}^r F([0,1)^d). \]
Therefore, we can conclude results for the Triebel-Lizorkin and Sobolev spaces.

\begin{thm}

Let $1 \leq p,q \leq \infty$ and $0 < r < \frac{1}{\max(p,q)}$. Then there exist constants $c_1, c_2 > 0$ such that, for any $N \geq 2$, the discrepancy function of any point set $\mathcal{P}$ in $[0,1)^d$ with $N$ points satisfies
\[ \left\| D_{\mathcal{P}} | S_{pq}^r F([0,1)^d) \right\| \geq c_1 \, N^{r-1} (\log N)^{\frac{d-1}{q}}, \]
and, there exists a point set $\P \in [0,1)^d$ with $N$ points such that
\[ \left\| D_{\P} | S_{pq}^r F([0,1)^d) \right\| \leq c_2 \, N^{r-1} \, (\log N)^{\frac{d-1}{q}}. \]

\end{thm}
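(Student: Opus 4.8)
The plan is to derive both inequalities from the Besov-space results that are already available: Theorem~\ref{thm_main} for the upper bound and Triebel's lower bound recalled in the introduction, fed through the two chains of embeddings quoted above. First I would note that the hypothesis $0<r<\frac{1}{\max(p,q)}$ is non-vacuous only when $\max(p,q)<\infty$, so we may assume $1\le p,q<\infty$ throughout; this also removes any ambiguity about the endpoint $p=\infty$ of the $F$-scale, which is defined only for $p<\infty$.

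For the upper bound I would split according to whether $q\le p$ or $q>p$. If $q\le p$ then $\min(p,q)=q$, so the first chain gives $S_{pq}^rB([0,1)^d)\hookrightarrow S_{pq}^rF([0,1)^d)$; since $r<\tfrac1p=\tfrac{1}{\max(p,q)}$, Theorem~\ref{thm_main} produces, for every $N\ge2$, a Chen--Skriganov point set $\P$ with $\|D_\P|S_{pq}^rB([0,1)^d)\|\le c\,N^{r-1}(\log N)^{\frac{d-1}{q}}$, and the embedding transfers this to the $F$-norm on the same $\P$. If $q>p$, the crude embedding $S_{p,\min(p,q)}^rB=S_{pp}^rB\hookrightarrow S_{pq}^rF$ would only yield the exponent $\frac{d-1}{p}$, which is too large; instead I would use the second chain with $p_2=p\le q=p_1$, namely $S_{qq}^rB([0,1)^d)\hookrightarrow S_{pq}^rF([0,1)^d)$, and apply Theorem~\ref{thm_main} with both integrability and fine indices equal to $q$ --- legitimate since now $r<\tfrac1q=\tfrac{1}{\max(p,q)}$ --- which gives the sharp power $(\log N)^{\frac{d-1}{q}}$.

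For the lower bound I would argue symmetrically, reversing the embeddings. If $q\ge p$ then $\max(p,q)=q$, so $S_{pq}^rF([0,1)^d)\hookrightarrow S_{pq}^rB([0,1)^d)$ by the first chain; since $\tfrac1p-1<0<r<\tfrac1q\le\tfrac1p$ and $q<\infty$, Triebel's lower bound applies to $S_{pq}^rB([0,1)^d)$ and, pulled back along the embedding, yields $\|D_\P|S_{pq}^rF([0,1)^d)\|\ge c_1\,N^{r-1}(\log N)^{\frac{d-1}{q}}$ for every point set $\P$. If $q<p$, I would instead use the second chain with $p_1=p\ge q$, i.e.\ $S_{pq}^rF([0,1)^d)\hookrightarrow S_{qq}^rB([0,1)^d)$, and apply Triebel's bound to $S_{qq}^rB([0,1)^d)$; the constraints $\tfrac1q-1<r<\tfrac1q$ hold because $r<\tfrac1p<\tfrac1q$, and again the exponent comes out as $\frac{d-1}{q}$.

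I do not expect a real obstacle: all the analytic content sits in Theorem~\ref{thm_main} and in Triebel's lower bound. The only point demanding care is choosing the right Besov space to compare with --- one must route through $S_{qq}^rB$ via the second embedding chain, rather than through $S_{p,\min(p,q)}^rB$ or $S_{p,\max(p,q)}^rB$, whenever $p$ and $q$ sit on the ``wrong'' side of each other, since only that choice reproduces the correct logarithmic power $(\log N)^{\frac{d-1}{q}}$. Verifying that the parameter restrictions of Theorem~\ref{thm_main} and of Triebel's theorem are all met under $0<r<\frac{1}{\max(p,q)}$, as sketched above, then finishes the proof.
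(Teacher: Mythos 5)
Your proposal is correct and follows exactly the route the paper intends: the paper states this theorem without a written-out proof, deriving it from Theorem \ref{thm_main}, Triebel's Besov lower bound, and the two quoted embedding chains. Your case analysis, in particular routing through $S_{qq}^r B([0,1)^d)$ via the second chain whenever the first chain would only give the exponent $\frac{d-1}{\min(p,q)}$ or $\frac{d-1}{\max(p,q)}$, is precisely the detail the paper leaves implicit, and your verification of the parameter restrictions under $0<r<\frac{1}{\max(p,q)}$ is accurate.
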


\begin{cor}

Let $1 \leq p \leq \infty$ and $0 < r < \frac{1}{\max(p,2)}$. Then there exist constants $c_1, c_2 > 0$ such that, for any $N \geq 2$, the discrepancy function of any point set $\mathcal{P}$ in $[0,1)^d$ with $N$ points satisfies
\[ \left\| D_{\mathcal{P}} | S_p^r H([0,1)^d) \right\| \geq c_1 \, N^{r-1} (\log N)^{\frac{d-1}{2}}, \]
and, there exists a point set $\P \in [0,1)^d$ with $N$ points such that
\[ \left\| D_{\P} | S_p^r H([0,1)^d) \right\| \leq c_2 \, N^{r-1} \, (\log N)^{\frac{d-1}{2}}. \]

\end{cor}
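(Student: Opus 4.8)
The plan is to obtain this corollary as the special case $q=2$ of the preceding theorem on Triebel--Lizorkin spaces with dominating mixed smoothness, using the definition $S_p^r H([0,1)^d)=S_{p2}^r F([0,1)^d)$ for $1<p<\infty$. First I would observe that the hypothesis $0<r<\frac{1}{\max(p,2)}$ is exactly the hypothesis of that theorem read with $q=2$, so that for every point set $\mathcal{P}$ in $[0,1)^d$ with $N$ points
\[ \left\| D_{\mathcal{P}} | S_p^r H([0,1)^d) \right\| = \left\| D_{\mathcal{P}} | S_{p2}^r F([0,1)^d) \right\| \geq c_1 \, N^{r-1} (\log N)^{\frac{d-1}{2}}, \]
while for a suitable $\mathcal{P}$ --- a Chen--Skriganov point set --- the reverse inequality with a constant $c_2$ holds. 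That is all that is needed. For $p=\infty$ the parameter range $0<r<\frac{1}{\max(p,2)}$ is empty, so there is nothing to prove, and for $p=1$ one reads $S_1^r H:=S_{12}^r F$ and argues verbatim.

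Alternatively, and more self-containedly, I would reduce directly to Theorem \ref{thm_main} by means of the embeddings recorded above, specialized to $q=2$:
\[ S_{p,\min(p,2)}^r B([0,1)^d) \hookrightarrow S_{p2}^r F([0,1)^d) \hookrightarrow S_{p,\max(p,2)}^r B([0,1)^d). \]
For the upper bound, the right-hand embedding together with Theorem \ref{thm_main} applied with the index $\max(p,2)$ in the role of $q$ provides a Chen--Skriganov point set $\mathcal{P}$ with
\[ \left\| D_{\mathcal{P}} | S_p^r H([0,1)^d) \right\| \leq c \, N^{r-1} (\log N)^{\frac{d-1}{\max(p,2)}} \leq c \, N^{r-1} (\log N)^{\frac{d-1}{2}}, \]
the last step being legitimate because $\max(p,2)\geq 2$ and $\log N$ stays bounded away from $0$ for $N\geq 2$. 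For the lower bound, the left-hand embedding reduces the claim to Triebel's lower bound for $S_{p,\min(p,2)}^r B([0,1)^d)$, whose parameter constraints $\frac{1}{p}-1<r<\frac{1}{p}$ together with the endpoint conditions on the second index are satisfied in the whole range considered here; this yields the factor $(\log N)^{\frac{d-1}{\min(p,2)}}\geq (\log N)^{\frac{d-1}{2}}$ since $\min(p,2)\leq 2$.

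Since every step is either a verbatim invocation of the preceding theorem or a one-line consequence of an embedding, I do not expect a genuine obstacle. The only points deserving a moment's care are matching the parameter ranges --- in particular checking that the choice $q=2$ respects the endpoint restrictions in Triebel's bound --- and the harmless comparison of powers of $\log N$, where the single value $N=2$ is absorbed into the constants.
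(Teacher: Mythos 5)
Your primary route is correct and is exactly the paper's (implicit) argument: the corollary is the case $q=2$ of the preceding Triebel--Lizorkin theorem via the identification $S_p^r H([0,1)^d) = S_{p2}^r F([0,1)^d)$, with the hypothesis $0<r<\frac{1}{\max(p,2)}$ matching verbatim; your remarks on $p=\infty$ (empty range) and $p=1$ are fine.

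Your ``alternative, more self-contained'' route, however, is not correct as written, because you apply both embeddings in the wrong direction. An embedding $A \hookrightarrow B$ gives $\|f\,|\,B\| \leq C\,\|f\,|\,A\|$, so the right-hand embedding $S_{p2}^r F \hookrightarrow S_{p,\max(p,2)}^r B$ bounds the $B$-norm by the $F$-norm and cannot produce an upper bound for $\|D_{\mathcal{P}}\,|\,S_p^r H\|$; likewise the left-hand embedding $S_{p,\min(p,2)}^r B \hookrightarrow S_{p2}^r F$ bounds the $F$-norm from above and cannot yield the lower bound. If you orient them correctly, the first family alone is still insufficient: the upper bound then comes with exponent $\frac{d-1}{\min(p,2)} \geq \frac{d-1}{2}$ (too large when $p<2$) and the lower bound with exponent $\frac{d-1}{\max(p,2)} \leq \frac{d-1}{2}$ (too small when $p>2$). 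What rescues the remaining cases is the second family of embeddings recorded in the paper, $S_{p_1 q}^r F \hookrightarrow S_{qq}^r B \hookrightarrow S_{p_2 q}^r F$ for $p_2 \leq q \leq p_1$: for $p\leq 2$ the upper bound follows from $S_{22}^r B \hookrightarrow S_{p2}^r F$ together with Theorem \ref{thm_main} at $p=q=2$ (requiring $r<\tfrac12=\tfrac{1}{\max(p,2)}$), and for $p\geq 2$ the lower bound follows from $S_{p2}^r F \hookrightarrow S_{22}^r B$ together with Triebel's lower bound at $p=q=2$. With that repair the self-contained route works, but as stated your monotonicity comparisons of powers of $\log N$ are attached to inequalities that point the wrong way.
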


The distribution of points in a cube is not just a theoretical concept. Its application in quasi-Monte Carlo methods is very important. Quadrature formulas need very well distributed point sets. The connection of discrepancy and the error of quadrature formulas can be given for a lot of norms. In \cite[Theorem 6.11]{T10a} Triebel gave this connection for Besov spaces with dominating mixed smoothness. Using the embeddings we get additional results for the Triebel-Lizorkin spaces with dominating mixed smoothness. We define the error of the quadrature formulas in some Banach space $M([0,1)^d)$ of functions on $[0,1)^d$ with $N$ points as
\begin{align*}
\err_N(M([0,1)^d)) = \inf_{\{ x_1, \ldots, x_N \} \subset [0,1)^d} \sup_{f \in M^1_0([0,1)^d)} \left| \int_{[0,1)^d} f(x) \, \dint x - \frac{1}{N}\sum_{i = 1}^N f(x_i) \right|
\end{align*}
where by $M^1_0([0,1)^d)$ we mean the subset of the unit ball of $M([0,1)^d)$ with the property that for all $f \in M^1_0([0,1)^d)$ its extension to $[0,1]^d$ vanishes whenever one of the coordinates of the argument is $1$. Then \cite[Theorem 6.11]{T10a} states that for $1 \leq p,q \leq \infty$ and $\frac{1}{p} < r < \frac{1}{p} + 1$ there exist constants $c_1,c_2 > 0$ such that for every integer $N \geq 2$ we have
\begin{multline*}
c_1 \, \inf_{\mathcal{P} \subset [0,1)^d; \, \#\mathcal{P} = N} \left\| D_{\mathcal{P}} | S_{pq}^r B([0,1)^d) \right\| \\
\leq \err_N(S_{p'q'}^{1-r} B([0,1)^d)) \\
\leq c_2 \, \inf_{\mathcal{P} \subset [0,1)^d; \, \#\mathcal{P} = N} \left\| D_{\mathcal{P}} | S_{pq}^r B([0,1)^d) \right\|
\end{multline*}
where
\[ \frac{1}{p} + \frac{1}{p'} = \frac{1}{q} + \frac{1}{q'} = 1. \]
We can thereby conclude results for the integration errors. For more details we refer to \cite{M12b}.

\begin{thm}

\mbox{}
\begin{enumerate}[(i)]
	\item Let $1 \leq p,q \leq \infty$ and $q < \infty$ if $p = 1$ and $q > 1$ if $p = \infty$. Let $\frac{1}{p} < r < 1$. Then there exist constants $c_1, C_1 > 0$ such that, for any integer $N \geq 2$, we have
	\[ c_1 \, \frac{(\log N)^{\frac{(q-1)(d-1)}{q}}}{N^r} \leq \err_N(S_{pq}^r B) \leq C_1 \, \frac{(\log N)^{\frac{(q-1)(d-1)}{q}}}{N^r}. \]
	\item Let $1 \leq p,q < \infty$. Let $\frac{1}{\min(p,q)} < r < 1$. Then there exist constants $c_2, C_2 > 0$ such that, for any integer $N \geq 2$, we have
	\[ c_2 \, \frac{(\log N)^{\frac{(q-1)(d-1)}{q}}}{N^r} \leq \err_N(S_{pq}^r F) \leq C_2 \, \frac{(\log N)^{\frac{(q-1)(d-1)}{q}}}{N^r}. \]
	\item Let $1 \leq p < \infty$. Let $\frac{1}{\min(p,2)} < r < 1$. Then there exist constants $c_3, C_3 > 0$ such that, for any integer $N \geq 2$, we have
	\[ c_3 \, \frac{(\log N)^{\frac{d-1}{2}}}{N^r} \leq \err_N(S_p^r H) \leq C_3 \, \frac{(\log N)^{\frac{d-1}{2}}}{N^r}. \] \label{BTY_quote}
\end{enumerate}

\end{thm}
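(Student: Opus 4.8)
The plan is to derive all three parts from Theorem~\ref{thm_main} together with the two abstract tools already recalled above: the equivalence \cite[Theorem~6.11]{T10a} between the integration error and the inverse discrepancy, and the embeddings listed just before Theorem~\ref{thm_main}. For part~(i) I would apply \cite[Theorem~6.11]{T10a} with $p'$, $q'$, $1-r$ in place of $p$, $q$, $r$; since $(p')'=p$ and $(q')'=q$, this gives constants $c_1,c_2>0$ with
\[ c_1 \inf_{\#\mathcal{P}=N}\left\| D_{\mathcal{P}} | S_{p'q'}^{1-r} B \right\| \le \err_N(S_{pq}^r B) \le c_2 \inf_{\#\mathcal{P}=N}\left\| D_{\mathcal{P}} | S_{p'q'}^{1-r} B \right\|, \]
the parameter restriction of \cite[Theorem~6.11]{T10a} translating, in the dual variables, to precisely $\tfrac1p<r<1$. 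Then I would estimate the right-hand infimum from above by Theorem~\ref{thm_main} applied with $(p',q',1-r)$ --- admissible exactly because $0<1-r<\tfrac1{p'}$, i.e.\ $\tfrac1p<r<1$ --- which yields $\lesssim N^{(1-r)-1}(\log N)^{(d-1)/q'}=N^{-r}(\log N)^{\frac{(q-1)(d-1)}{q}}$, using $\tfrac1{q'}=\tfrac{q-1}{q}$. I would estimate the left-hand infimum from below by Triebel's lower bound recalled in the Introduction, again with $(p',q',1-r)$: it holds for \emph{every} point set, hence for the infimum, and produces the matching $\gtrsim N^{-r}(\log N)^{\frac{(q-1)(d-1)}{q}}$. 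A short check shows that the side conditions ``$q<\infty$ if $p=1$'' and ``$q>1$ if $p=\infty$'' of part~(i) are exactly what is needed to apply that lower bound to $S_{p'q'}^{1-r}B$. This proves~(i).

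For parts~(ii) and~(iii) I would transfer the $B$-result to $F$-spaces via the two embedding chains quoted before Theorem~\ref{thm_main}, using that a continuous embedding $M_1\hookrightarrow M_2$ forces $\err_N(M_1)\le C\,\err_N(M_2)$ (the admissible unit ball of $M_1$ lies in a fixed multiple of that of $M_2$). Splitting into the cases $p\ge q$ and $p<q$: when $p\ge q$, the embedding $S_{pq}^r F\hookrightarrow S_{qq}^r B$ (the chain $S_{p_1q}^rF\hookrightarrow S_{qq}^rB$ with $p_1=p$) gives $\err_N(S_{pq}^rF)\lesssim\err_N(S_{qq}^rB)$, while $S_{pq}^rB=S_{p,\min(p,q)}^rB\hookrightarrow S_{pq}^rF$ gives $\err_N(S_{pq}^rF)\gtrsim\err_N(S_{pq}^rB)$; when $p<q$, the embedding $S_{pq}^rF\hookrightarrow S_{p,\max(p,q)}^rB=S_{pq}^rB$ gives the upper bound $\err_N(S_{pq}^rF)\lesssim\err_N(S_{pq}^rB)$, while $S_{qq}^rB\hookrightarrow S_{p_2q}^rF$ with $p_2=p$ gives the lower bound $\err_N(S_{pq}^rF)\gtrsim\err_N(S_{qq}^rB)$. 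In every case the only $B$-space errors occurring are $\err_N(S_{qq}^rB)$ and $\err_N(S_{pq}^rB)$, and by part~(i) both equal $N^{-r}(\log N)^{\frac{(q-1)(d-1)}{q}}$ up to constants; the hypotheses needed for part~(i) to apply to both are $\tfrac1q<r<1$ and $\tfrac1p<r<1$, that is, $\tfrac1{\min(p,q)}<r<1$, and since $1\le p,q<\infty$ no side condition is triggered. This gives~(ii), and~(iii) is~(ii) specialized to $q=2$ via $S_p^rH=S_{p2}^rF$, with $\min(p,2)$ in the smoothness condition and $\frac{(2-1)(d-1)}{2}=\frac{d-1}{2}$ in the logarithmic exponent.

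The argument is essentially bookkeeping; the point needing care is the alignment of the parameter windows. In~(i) the single interval $\tfrac1p<r<1$ must simultaneously meet the hypothesis of \cite[Theorem~6.11]{T10a} in the dual variables, the hypothesis $0<1-r<\tfrac1{p'}$ of Theorem~\ref{thm_main}, and the hypothesis $\tfrac1{p'}-1<1-r<\tfrac1{p'}$ of the lower bound; in~(ii) one must check that the embedding chosen in each of the cases $p\ge q$ and $p<q$ never forces a parameter outside the range where~(i) is already available. Beyond these compatibility checks I expect no essential obstacle.
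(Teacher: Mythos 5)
Your proposal is correct and is essentially the paper's intended derivation (which the paper only sketches, deferring details to \cite{M12b}): dualize the integration error via \cite[Theorem 6.11]{T10a}, bound the resulting infimum of $\| D_{\mathcal{P}} \,|\, S_{p'q'}^{1-r}B \|$ from above by Theorem~\ref{thm_main} with parameters $(p',q',1-r)$ and from below by Triebel's discrepancy lower bound quoted in the introduction (whose side conditions dualize exactly to those in part (i)), and then transfer to the $F$- and $H$-scales through the stated embeddings together with the monotonicity of $\err_N$ under continuous embeddings. One minor bookkeeping remark: the hypothesis of the duality theorem, expressed in the variables of part (i), is $\tfrac{1}{p}<r<\tfrac{1}{p}+1$ rather than ``precisely $\tfrac{1}{p}<r<1$''; the extra restriction $r<1$ is supplied, as you yourself note, by the admissibility condition $0<1-r<\tfrac{1}{p'}$ of Theorem~\ref{thm_main}, so your parameter window is nonetheless the right one.
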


In the next sections we introduce the constructions by Chen and Skriganov which we will use to prove our result. In order to do so we will calculate $b$-adic Haar coefficients of the discrepancy function. We also will introduce $b$-adic Walsh functions.

\section{The $b$-adic Haar bases}
For some integer $b\geq 2$ a $b$-adic interval of length $b^{-j},\, j\in\N_0$ in $[0,1)$ is an interval of the form
\[ I_{jm} = \big[ b^{-j} m, b^{-j} (m+1) \big)\]
for $m=0,1,\ldots,b^j-1$. For $j \in \N_0$ we divide $I_{jm}$ into $b$ intervals of length $b^{-j - 1}$, i.e. $I_{jm}^k = I_{j + 1,bm + k},\, k=0,\ldots,b - 1$. As an additional notation we put $I_{-1,0}^{-1} = I_{-1,0} = [0,1)$. Let $\D_j = \{0,1,\ldots,b^j-1\}$ and $\B_j = \{1,\ldots,b-1\}$ for $j \in \N_0$ and $\D_{-1} = \{0\}$ and $\B_{-1} = \{1\}$. The $b$-adic Haar functions $h_{jml}$ have support in $I_{jm}$. For any $j \in \N_0,\, m \in \D_j,\, l \in \B_j$ and any $k=0,\ldots,b-1$ the value of $h_{jml}$ in $I_{jm}^k$ is $e^{\frac{2\pi i}{b}lk}$. We denote the indicator function of $I_{-1,0}$ by $h_{-1,0,1}$. Let $\N_{-1} = \{-1,0,1,2,\ldots\}$. The functions $h_{jml},\, j \in \N_{-1},\,m \in \D_j,\,l\in\B_j$ are called $b$-adic Haar system. Normalized in $L_2([0,1))$ we obtain the orthonormal $b$-adic Haar basis of $L_2([0,1))$. A proof of this fact can be found in \cite{RW98}.

For $j = (j_1, \dots, j_d) \in \N_{-1}^d$, $m = (m_1, \ldots, m_d) \in \D_j = \D_{j_1} \times \ldots \times \D_{j_d}$ and $l = (l_1, \ldots, l_d) \in \B_j = \B_{j_1} \times \ldots \times \B_{j_d}$, the Haar function $h_{jml}$ is given as the tensor product $h_{jml}(x) = h_{j_1 m_1 l_1}(x_1) \ldots h_{j_d m_d l_d}(x_d)$ for $x = (x_1, \ldots, x_d) \in [0,1)^d$. We will call $I_{jm} = I_{j_1 m_1} \times \ldots \times I_{j_d m_d}$ $b$-adic boxes. For $k = (k_1,\ldots,k_d)$ where $k_i \in \{ 0,\ldots,b - 1 \}$ for $j_i \in \N_0$ and $k_i = -1$ for $j_i = -1$ we put $I_{jm}^k = I_{j_1 m_1}^{k_1} \times \ldots \times I_{j_d m_d}^{k_d}$. The functions $h_{jml},\, j \in \N_{-1}^d,\,m \in \D_j,\,l\in\B_j$ are called $d$-dimensional $b$-adic Haar system. Normalized in $L_2([0,1)^d)$ we obtain the orthonormal $b$-adic Haar basis of $L_2([0,1)^d)$.

For any function $f \in L_2([0,1)^d)$ we have by Parseval's equation
\begin{align}
\|f|L_2\|^2 = \sum_{j\in\N_{-1}^d} b^{\max(0,j_1) + \ldots + \max(0,j_d)} \sum_{m\in \D_j, \, l\in\B_j} |\mu_{jml}|^2
\end{align}
where
\begin{align}
\mu_{jml} = \mu_{jml}(f) = \int_{[0,1)^d} f(x) h_{jml}(x) \, \dint x
\end{align}
are the $b$-adic Haar coefficients of $f$.

In \cite{M12a} we gave an equivalent norm for the Besov spaces with dominating mixed smoothness. Let $j = (j_1, \ldots, j_d) \in \N_{-1}^d$ and let $s = \#\{ i = 1, \ldots, d: \, j_i \neq -1 \}$. Then we can choose a subsequence $(\eta_\nu)_{\nu = 1}^s$ of $(1, \ldots, d)$ such that for all $\nu = 1, \ldots, s$ we have $j_{\eta_{\nu}} \neq -1$ while all other $j_i$ are equal to $-1$. Then we have
\begin{align} \label{eq_quasinorm}
\left\| f | S_{pq}^r B([0,1)^d) \right\| \approx \left( \sum_{j \in \N_{-1}^d} b^{(j_{\eta_1} + \ldots + j_{\eta_s})(r - \frac{1}{p} + 1) q} \left( \sum_{m \in \D_j, \, l \in \B_j} | \mu_{jml}|^p \right)^{\frac{q}{p}} \right)^{\frac{1}{q}}
\end{align}
for every $f \in S_{pq}^r B([0,1)^d)$.

\section{Constructions of Chen and Skriganov}
In this section we will describe the constructions of point sets proposed by Chen and Skriganov. We describe the constructions for those $N = b^n$ where $n$ is divisible by $2d$, i.e. $n =2dw$ for some $w \in \N$. Point sets with arbitrary number of points can be constructed with the usual method as is described for example in \cite[Section 16.1]{DP10}. We give some notation and definitions.

We begin with the definition of digital nets. Let $d \geq 1$, $b \geq 2$, and $n \geq 0$ be integers. A point set $\P \in [0,1)^d$ of $b^n$ points is called a net in base $b$ if every $b$-adic interval with volume $b^{-n}$ contains exactly one point of $\P$. Usually the term $(0,n,d)$-net is used but here we just say net for shortness. Digital nets in base $b$ are special nets in base $b$ that can be constructed using $n \times n$ matrices $C_1, \ldots, C_d$. We describe the digital method to construct digital nets. Let $b$ be a prime number. Let $n \in \N_0$. Let $C_1, \ldots, C_d$ be $n \times n$ matrices with entries from $\F_b$. We generate the net point $x_r = (x_r^1, \ldots, x_r^d)$ with $0 \leq r < b^n$. We expand $r$ in base $b$ as
\[ r = r_0 + r_1 b + \ldots + r_{n-1}b^{n-1} \]
with digits $r_k \in \{ 0, 1, \ldots, b-1 \}$, $1 \leq k \leq n-1$. We put $\bar{r} = (r_0, \ldots, r_{n-1})^{\top} \in \F_b^n$ and $\bar{h}_{r,i} = C_i \, \bar{r} = (h_{r,i,1}, \ldots, h_{r,i,n})^{\top} \in \F_b$, $1 \leq i \leq d$. Then we get $x_r^i$ as
\[ x_r^i = \frac{h_{r,i,1}}{b} + \ldots + \frac{h_{r,i,n}}{b^n}. \]
A point set $\left\{ x_0, \ldots x_{b^n-1} \right\}$ constructed with the digital method is called digital net in base $b$ with generating matrices $C_1, \ldots C_d$ if it is a net in base $b$.

For a digital net $\P$ with generating matrices $C_1,\ldots,C_d$ over $\F_b$ we define
\[ \Dn'(C_1,\ldots,C_d) = \left\{ t \in \{ 0, \ldots, b^n - 1 \}^d \, : \, C_1^{\top} \, \bar{t}_1 + \ldots + C_d^{\top} \, \bar{t}_d = 0 \right\} \backslash \{ 0 \} \]
where $t = (t_1, \ldots, t_d)$ and for $1 \leq i \leq d$ we denote by $\bar{t}_i$ the $n$-dimensional column vector of $b$-adic digits of $t_i$. Instead of $(0, \ldots, 0)$ we just wrote $0$.

Now let $\alpha \in \N$ with $b$-adic expansion $\alpha = \alpha_0 + \alpha_1 b + \ldots + \alpha_{h - 1} b^{h - 1}$ where $\alpha_{h - 1} \neq 0$. We define the Niederreiter-Rosenbloom-Tsfasman (NRT) weight by $\varrho(\alpha) = h$. Furthermore, we define $\varrho(0) = 0$. We define the Hamming weight $\varkappa(\alpha)$ as the number of non-zero digits $\alpha_{\nu}$, $0 \leq \nu < \varrho(\alpha)$.

For $\alpha = (\alpha_1,\ldots,\alpha_d) \in \N_0^d$, let
\[ \varrho^d(\alpha) = \sum_{i = 1}^d \varrho(\alpha_i) \text{ and } \varkappa^d(\alpha) =  \sum_{i = 1}^d \varkappa(\alpha_i). \]
Now let $b$ be prime and let $n \in \N$. Let $\C$ be some $\F_b$-linear subspace of $\F_b^{dn}$. Then we define the dual space $\C^{\perp}$ relative to the standard inner product in $\F_b^{dn}$ as
\[ \C^{\perp} = \left\{ A \in \F_b^{dn}: \, B \cdot A = 0 \text{ for all } B \in \C \right\}. \]
We have $\dim(\C^{\perp}) = dn - \dim(\C)$ and $(\C^{\perp})^{\perp} = \C$.

Now let $a = (a_1, \ldots, a_n) \in \F_b^n$ and
\[ v_n(a) = \begin{cases} 0 & \text{ if } a = 0, \\ \max\left\{ \nu: \, a_{\nu} \neq 0 \right\} & \text{ if } a \neq 0. \end{cases} \]
We define $\varkappa_n(a)$ as the number of indices $1 \leq \nu \leq n$ such that $a_{\nu} \neq 0$. Let $A = (a_1, \ldots, a_d) \in \F_b^{dn}$ with $a_i \in \F_b^n$ for $1 \leq i \leq d$ and let
\[ v_n^d(A) = \sum_{i = 1}^d v_n(a_i) \text{ and } \varkappa_n^d(A) = \sum_{i = 1}^d \varkappa_n(a_i). \]
Now let $\C \neq \left\{ 0 \right\}$. Then the minimum distance of $\C$ is defined as
\[ \delta_n(\C) = \min\left\{ V_n(A): \, A \in \C \textbackslash \left\{ 0 \right\} \right\}. \]
Furthermore, let $\delta_n(\left\{ 0 \right\}) = dn + 1$. Finally we define
\[ \varkappa_n(\C) = \min\left\{ \varkappa_n(A): \, A \in \C \textbackslash \left\{ 0 \right\} \right\}. \]
The weight $\varkappa_n(\C)$ is called Hamming weight of $\C$.

Now we continue with the constructions. So let $w \in \N$ such that $n = 2dw$.
Let
\[ f(z) = f_0 + f_1 z + \ldots + f_{h - 1} z^{h - 1} \]
be a polynomial in $\F_b[z]$. For every $\lambda \in \N$ the $\lambda$-th hyper-derivative is
\[ \partial^{\lambda} f(z) = \sum_{i=0}^{h - 1} \binom{i}{\lambda} f_{\lambda} z^{i - \lambda}. \]
We use the usual convention for the binomial coefficient modulo $b$ that $\binom{i}{\lambda} = 0$ whenever $\lambda > i$.
Let $b \geq 2d^2$ be a prime. Then there are $2d^2$ distinct elements $\beta_{i,\nu} \in \F_b$ for $1 \leq i \leq d$ and $1 \leq \nu \leq 2d$. For $1 \leq i \leq d$ let
\[ a_i(f) = \left( \left( \partial^{\lambda - 1} f(\beta_{i,\nu}) \right)_{\lambda = 1}^w \right)_{\nu=1}^{2d} \in \F_b^n. \]
We define $\C_n \subset \F_b^{dn}$ as
\[ \C_n = \left\{ A(f) = (a_1(f), \ldots, a_d(f)): \, f \in \F_b[z], \, \dg(f) < n \right\}. \]
Clearly, $\C_n$ has exactly $b^n$ elements. The set of polynomials in $\F_b[z]$ with $\dg(f) < n$ is closed under addition and scalar multiplication over $\F_b$, hence $\C_n$ is an $\F_b$-linear subspace of $\F_b^{dn}$. For example from \cite[Theorem 16.28]{DP10} one learns that $\C_n$ has dimension $n$ while its dual $nd - n$ and it satisfies
\begin{align} \label{dualspace}
\varkappa_n(\C_n^{\perp}) \geq 2d + 1 \text{ and } \delta_n(\C_n^{\perp}) \geq n + 1.
\end{align}
Finally we only need to transfer $\C_n$ into the unit cube $[0,1)^d$ as a point set. To do so we define a mapping $\Phi_n: \, \F_b^{dn} \rightarrow [0,1)^d$. Let $a = (a_1, \ldots, a_n) \in \F_b^n$, we set
\[ \Phi_n(a) = \frac{a_1}{b} + \ldots + \frac{a_n}{b^n} \]
and for $A = (a_1, \ldots, a_d) \in F_b^{dn}$, we set
\[ \Phi_n^d(A) = \left( \Phi_n(a_1), \ldots, \Phi_n(a_d) \right). \]

So we are ready to define the point set that proves our main result. The point set of Chen and Skriganov is given by
\[ \CS_n = \Phi_n^d(\C_n) \]
and contains exactly $N = b^n$ points.
From \cite[Theorem 7.14]{DP10} we finally learn that $\CS_n$ is a digital net in base $b$ since for every $\F_b$-linear subspace $\C$ of $\F_b^{dn}$ with dimension $n$ and dual space satisfying \eqref{dualspace}, $\Phi_n^d(\C)$ is a digital net in base $b$ with some generating matrices $C_1, \ldots, C_d$. We will call $\Phi_n^d(\C)$ the corresponding digital net. As a final remark of this section we would like to note that we needed $b$ to be large so that $\F_b$ has enough distinct elements. But there are even general rules for nets on the minimum of $b$ such that, a net in base $b$ can exist (see \cite[Chapter 4]{DP10}.

\section{The $b$-adic Walsh functions}
Let $b \geq 2$ be an integer. For some $\alpha \in \N_0$ with $b$-adic expansion $\alpha = \alpha_0 + \alpha_1 b + \ldots + \alpha_{\varrho(\alpha) - 1} b^{\varrho(\alpha) - 1}$ we define the $\alpha$-th $b$-adic Walsh function $\wal_{\alpha}: \, [0,1) \rightarrow \Cx$, as
\[ \wal_{\alpha}(x) = e^{\frac{2 \pi i}{b} (\alpha_0 x_1 + \alpha_1 x_2 + \ldots + \alpha_{\varrho(\alpha) - 1} x_{\varrho(\alpha)})}, \]
for $x \in [0,1)$ with $b$-adic expansion  $x = x_1 b^{-1} + x_2 b^{-2} + \ldots$. The functions $\wal_{\alpha}, \, \alpha \in \N_0$ are called $b$-adic Walsh system.

For $\alpha = (\alpha^1, \ldots, \alpha^d) \in \N_0^d$ the Walsh function $\wal_{\alpha}$ is given as the tensor product $\wal_{\alpha}(x) = \wal_{\alpha^1}(x_1) \ldots \wal_{\alpha^d}(x_d)$ for $x = (x_1, \ldots, x_d) \in [0,1)^d$. The functions $\wal_{\alpha}, \, \alpha \in \N_0^d$ are called $d$-dimensional $b$-adic Walsh system.

For $\alpha \in \N$ the function $\wal_{\alpha}$ is constant on $b$-adic intervals $I_{\varrho(\alpha)m}$ for any $m \in \D_{\varrho(\alpha)}$. Further, $\wal_0$ is constant on $[0,1)$ with value $1$. We have
\[ \int_{[0,1)} \wal_{\alpha}(x)\dint x = \begin{cases} 1 & \text{ if } \alpha = 0, \\ 0 & \text{ if } \alpha \neq 0, \end{cases} \]
and for $\alpha, \beta \in \N_0^d$ we have
\[ \int_{[0,1)^d} \wal_{\alpha}(x) \overline{\wal_{\beta}(x)} \dint x = \begin{cases} 1 & \text{ if } \alpha = \beta, \\ 0 & \text{ if } \alpha \neq \beta. \end{cases} \]
The $d$-dimensional $b$-adic Walsh system is an orthonormal basis in $L_2([0,1)^d)$. The proofs of these facts can be found for example in Appendix A of \cite{DP10}.

\section{Calculation of the $b$-adic Haar coefficients}
Before we can compute the Haar coefficients we need some easy lemmas. We omit the proofs since they are nothing further but easy exercises.

\begin{lem} \label{lem_haar_coeff_besov_x}

Let $f(x) = x_1 \cdot \ldots \cdot x_d$ for $x=(x_1,\ldots,x_d) \in [0,1)^d$. Let $j \in \N_{-1}^d, \, m \in \D_j, l \in \B_j$ and let $\mu_{jml}$ be the $b$-adic Haar coefficient of $f$. Then
\[ \mu_{jml} = \frac{b^{-2j_{\eta_1} - \ldots - 2j_{\eta_s} - s}}{2^{d-s}(e^{\frac{2\pi i}{b} l_{\eta_1}} - 1) \cdot \ldots \cdot (e^{\frac{2\pi i}{b} l_{\eta_s}} - 1)}. \]

\end{lem}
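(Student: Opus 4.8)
The plan is to compute $\mu_{jml} = \int_{[0,1)^d} x_1\cdots x_d\, h_{jml}(x)\,\dint x$ by Fubini, factoring the integral into a product of one-dimensional integrals $\int_{[0,1)} x_i\, h_{j_i m_i l_i}(x_i)\,\dint x_i$. The coordinates split naturally into two groups: those with $j_i = -1$, for which $h_{-1,0,1}$ is the indicator of $[0,1)$ and the integral is simply $\int_0^1 x_i\,\dint x_i = \tfrac12$, contributing the factor $2^{-(d-s)}$; and those with $j_i \neq -1$, i.e. the $s$ indices $\eta_1,\ldots,\eta_s$, for which one must compute $\int_{I_{j_i m_i}} x_i\, h_{j_i m_i l_i}(x_i)\,\dint x_i$ with $h_{j_i m_i l_i}$ supported on $I_{j_i m_i}$ and taking the value $e^{\frac{2\pi i}{b} l_i k}$ on the $k$-th subinterval $I_{j_i m_i}^k$ of length $b^{-j_i-1}$, $k = 0,\ldots,b-1$.

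For such a one-dimensional integral I would write $I_{j_i m_i}^k = [b^{-j_i-1}(b m_i + k),\, b^{-j_i-1}(b m_i + k + 1))$ and evaluate $\int_{I_{j_i m_i}^k} x\,\dint x$ explicitly; the midpoint of this interval is $b^{-j_i-1}(b m_i + k + \tfrac12)$ and the interval has length $b^{-j_i-1}$, so the integral equals $b^{-j_i-1}\, b^{-j_i-1}(b m_i + k + \tfrac12) = b^{-2j_i-2}(b m_i + k + \tfrac12)$. Summing against the weights $e^{\frac{2\pi i}{b} l_i k}$ over $k = 0,\ldots,b-1$, the constant part $b m_i + \tfrac12$ is killed because $\sum_{k=0}^{b-1} e^{\frac{2\pi i}{b} l_i k} = 0$ for $l_i \in \{1,\ldots,b-1\}$, and only the term $\sum_{k=0}^{b-1} k\, e^{\frac{2\pi i}{b} l_i k}$ survives. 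This is the standard finite geometric-type sum; differentiating $\sum_{k=0}^{b-1} z^k = \tfrac{z^b-1}{z-1}$ and evaluating at the $b$-th root of unity $z = e^{\frac{2\pi i}{b} l_i}$ (where $z^b = 1$) gives $\sum_{k=0}^{b-1} k z^k = \tfrac{b}{z-1}$, so the one-dimensional integral equals $b^{-2j_i-2}\cdot \tfrac{b}{e^{\frac{2\pi i}{b} l_i}-1} = \tfrac{b^{-2j_i-1}}{e^{\frac{2\pi i}{b} l_i}-1}$.

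Multiplying the $d-s$ factors of $\tfrac12$ with the $s$ factors $\tfrac{b^{-2 j_{\eta_\nu}-1}}{e^{\frac{2\pi i}{b} l_{\eta_\nu}}-1}$ collects the powers of $b$ into $b^{-2j_{\eta_1}-\cdots-2j_{\eta_s}-s}$ and yields exactly the claimed formula
\[
\mu_{jml} = \frac{b^{-2j_{\eta_1} - \ldots - 2j_{\eta_s} - s}}{2^{d-s}\,(e^{\frac{2\pi i}{b} l_{\eta_1}} - 1) \cdots (e^{\frac{2\pi i}{b} l_{\eta_s}} - 1)}.
\]
The only mildly delicate point is the evaluation of $\sum_{k=0}^{b-1} k\, e^{\frac{2\pi i}{b} l_i k}$; everything else is bookkeeping of indices and powers of $b$. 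Since the author has already declared this lemma an easy exercise whose proof is omitted, I would simply present the one-dimensional computation and the Fubini factorization and be done.
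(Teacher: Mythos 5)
Your proposal is correct and is precisely the straightforward argument the paper intends (its proof is omitted as an easy exercise): Fubini factorizes $\mu_{jml}$ into one-dimensional integrals, each coordinate with $j_i=-1$ contributes $\int_0^1 x\,\dint x=\tfrac12$, and for $j_i\neq -1$ the midpoint-times-length evaluation together with $\sum_{k=0}^{b-1}e^{\frac{2\pi i}{b}l_ik}=0$ and $\sum_{k=0}^{b-1}k\,z^k=\frac{b}{z-1}$ at the nontrivial root of unity $z=e^{\frac{2\pi i}{b}l_i}$ gives $\frac{b^{-2j_i-1}}{e^{\frac{2\pi i}{b}l_i}-1}$, which assembles into the stated formula. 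No gaps.
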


\begin{lem} \label{lem_haar_coeff_besov_indicator}

Let $z = (z_1,\ldots,z_d) \in [0,1)^d$ and $g(x) = \chi_{[0,x)}(z)$ for $x = (x_1, \ldots, x_d) \in [0,1)^d$. Let $j \in \N_{-1}^d, \, m \in \D_j, l \in \B_j$ and let $\mu_{jml}$ be the $b$-adic Haar coefficient of $g$. Then $\mu_{jml} = 0$ whenever $z$ is not contained in the interior of the $b$-adic box $I_{jm}$ supporting the functions $h_{jml}$. If $z$ is contained in the interior of $I_{jm}$ then there is a $k = (k_1,\ldots,k_d)$ with $k_i \in \{0,1,\ldots,b-1\}$ if $j_i \neq -1$ or $k_i = -1$ if $j_i = -1$ such that $z$ is contained in $I_{jm}^k$. Then 
	      \begin{multline*}
        \mu_{jml} = b^{-j_{\eta_1} - \ldots - j_{\eta_s} - s} \prod_{1 \leq i \leq d; \, j_i = -1}(1-z_i) \times\\
        \times \prod_{\nu = 1}^s \left[ (bm_{\eta_{\nu}}+k_{\eta_{\nu}}+1-b^{j_{\eta_{\nu}}+1}z_{\eta_{\nu}}) e^{\frac{2\pi i}{b}k_{\eta_{\nu}} l_{\eta_{\nu}}} + \sum_{r_{\eta_{\nu}} = k_{\eta_{\nu}}+1}^{b-1} e^{\frac{2\pi i}{b}r_{\eta_{\nu}} l_{\eta_{\nu}}} \right].
        \end{multline*}     
\end{lem}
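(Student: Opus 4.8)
The plan is to exploit the tensor-product structure of both $g$ and $h_{jml}$ to split $\mu_{jml}$ into a product of one-dimensional integrals, and then to evaluate each factor by dividing the relevant $b$-adic interval into its $b$ children. First I would note that $\chi_{[0,x)}(z) = 1$ precisely when $z_i < x_i$ for every $i$, so $g(x) = \prod_{i=1}^d \chi_{(z_i,1)}(x_i)$ and hence
\[ \mu_{jml} = \int_{[0,1)^d} g(x) h_{jml}(x) \, \dint x = \prod_{i=1}^d \int_{[0,1)} \chi_{(z_i,1)}(x_i) \, h_{j_i m_i l_i}(x_i) \, \dint x_i . \]
For every index $i$ with $j_i = -1$ the corresponding factor is $\int_0^1 \chi_{(z_i,1)}(x_i)\,\dint x_i = 1 - z_i$, which produces the product over $\{ i : j_i = -1\}$; it thus remains to evaluate, for $j_i \in \N_0$, the integral $J_i := \int_{I_{j_i m_i}} \chi_{(z_i,1)}(x_i)\, h_{j_i m_i l_i}(x_i)\,\dint x_i$, using that $h_{j_i m_i l_i}$ is supported on $I_{j_i m_i}$ and is constant with value $e^{\frac{2\pi i}{b} l_i k}$ on the child interval $I_{j_i m_i}^k$.

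For the vanishing statement, suppose $z_i \notin I_{j_i m_i}$ for some $i$ with $j_i \ne -1$. If $z_i \geq b^{-j_i}(m_i+1)$ then $\chi_{(z_i,1)}$ vanishes on $I_{j_i m_i}$, so $J_i = 0$; if $z_i \leq b^{-j_i} m_i$ then $\chi_{(z_i,1)} \equiv 1$ on the interior of $I_{j_i m_i}$, so $J_i = b^{-j_i - 1}\sum_{k=0}^{b-1} e^{\frac{2\pi i}{b} l_i k} = 0$ because $l_i \in \{1, \dots, b-1\}$ makes this a full sum of nontrivial $b$-th roots of unity. (The borderline case $z_i = b^{-j_i} m_i$ likewise gives $J_i = 0$, which matches the word ``interior'' in the statement.) Consequently $\mu_{jml} = 0$ whenever $z$ is not in the interior of the box $I_{jm}$, which is the first assertion.

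Now assume $z$ lies in the interior of $I_{jm}$, and for each $i$ with $j_i \neq -1$ let $k_i$ be the unique index with $z_i \in I_{j_i m_i}^{k_i} = [b^{-j_i-1}(bm_i+k_i), b^{-j_i-1}(bm_i+k_i+1))$. Splitting $I_{j_i m_i}$ into its children and using that children to the left of $I_{j_i m_i}^{k_i}$ are entirely $\le z_i$ (contributing $0$), children to the right are entirely $> z_i$ (each contributing $b^{-j_i-1}e^{\frac{2\pi i}{b} l_i k}$), and on $I_{j_i m_i}^{k_i}$ the indicator equals $1$ exactly on the sub-piece $(z_i, b^{-j_i-1}(bm_i+k_i+1))$, I obtain
\[ J_i = b^{-j_i - 1}\left[ \big(bm_i + k_i + 1 - b^{j_i+1} z_i\big) e^{\frac{2\pi i}{b} k_i l_i} + \sum_{r = k_i+1}^{b-1} e^{\frac{2\pi i}{b} r l_i} \right], \]
where $b^{-j_i-1}(bm_i+k_i+1) - z_i = b^{-j_i-1}(bm_i+k_i+1-b^{j_i+1}z_i)$. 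Finally, multiplying the factors $J_i$ for $i$ with $j_i \neq -1$ (reindexing $\{ i : j_i \ne -1\} = \{\eta_1, \dots, \eta_s\}$) together with the factors $1 - z_i$ for $i$ with $j_i = -1$, and pulling out the common prefactor $\prod_{i : j_i \ne -1} b^{-j_i - 1} = b^{-j_{\eta_1} - \dots - j_{\eta_s} - s}$, yields exactly the claimed expression for $\mu_{jml}$. I do not expect a genuine obstacle: the computation is elementary, and the only points needing a little care are the endpoint conventions that make ``interior'' the correct vanishing threshold and keeping the bookkeeping of the indices $\eta_\nu$ consistent with the statement.
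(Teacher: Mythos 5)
Your computation is correct and is exactly the elementary tensor-product argument the paper has in mind (the paper omits the proofs of Lemmas \ref{lem_haar_coeff_besov_x}--\ref{lem_lambda_s_minus_1} as easy exercises): factor $\mu_{jml}$ into one-dimensional integrals, get $1-z_i$ for the coordinates with $j_i=-1$, and evaluate the remaining factors child interval by child interval, which reproduces the stated formula with the prefactor $b^{-j_{\eta_1}-\ldots-j_{\eta_s}-s}$. The only caveat, which is an artifact of the lemma's wording rather than of your argument, is that the vanishing assertion really concerns coordinates with $j_i\neq -1$: for a coordinate with $j_i=-1$ and $z_i=0$ the point fails to lie in the interior of $I_{jm}$ yet the factor $1-z_i$ does not vanish, and your proof (like the intended one) correctly establishes vanishing exactly when some coordinate with $j_i\neq -1$ has $z_i$ outside the interior of $I_{j_i m_i}$.
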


\begin{lem} \label{lem_lambda_s_minus_1}

Let $\lambda \in \N_0$ and $s \in \N$. Then
\[ \# \left\{ (j_1, \ldots, j_s) \in \N_0^s: \, j_1 + \ldots + j_s = \lambda \right\} \leq (\lambda + 1)^{s-1}. \]

\end{lem}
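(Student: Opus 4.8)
The plan is to exploit the fact that a tuple $(j_1,\ldots,j_s)$ counted on the left-hand side is already completely determined by its first $s-1$ coordinates: the constraint $j_1 + \ldots + j_s = \lambda$ forces $j_s = \lambda - (j_1 + \ldots + j_{s-1})$. So I would consider the map that sends $(j_1,\ldots,j_s)$ to $(j_1,\ldots,j_{s-1})$; it is injective on the set in question. Moreover, in any solution with nonnegative integer entries each coordinate satisfies $0 \le j_i \le \lambda$, so the image of this map is contained in $\{0,1,\ldots,\lambda\}^{s-1}$. Since $\{0,1,\ldots,\lambda\}$ has exactly $\lambda+1$ elements, the cardinality of the solution set is at most $(\lambda+1)^{s-1}$, which is precisely the asserted bound. (Note the map is far from surjective, which is why this is only an inequality, but that is all we need.)

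An alternative route is induction on $s$. For $s=1$ the only solution is $j_1 = \lambda$, so the count equals $1 = (\lambda+1)^0$. For the inductive step with $s \ge 2$, split the solutions according to the value $k := j_s \in \{0,1,\ldots,\lambda\}$; for each fixed $k$ the remaining coordinates form a solution of $j_1 + \ldots + j_{s-1} = \lambda - k$, and by the induction hypothesis there are at most $(\lambda - k + 1)^{s-2} \le (\lambda+1)^{s-2}$ of these. Summing over the $\lambda+1$ possible values of $k$ yields at most $(\lambda+1)\,(\lambda+1)^{s-2} = (\lambda+1)^{s-1}$, completing the induction.

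Either argument is a routine exercise and there is no real obstacle here; one could even observe that the exact cardinality is the binomial coefficient $\binom{\lambda+s-1}{s-1}$ and bound it crudely, but the injection argument above is the shortest and avoids any estimate on binomial coefficients. I would present the injection argument as the proof.
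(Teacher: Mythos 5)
Your argument is correct: the injection $(j_1,\ldots,j_s)\mapsto(j_1,\ldots,j_{s-1})$ into $\{0,1,\ldots,\lambda\}^{s-1}$ is valid (each coordinate of a nonnegative solution is at most $\lambda$, and $j_s$ is determined by the rest), and your inductive variant is also sound. The paper itself omits the proof of this lemma, labelling it an easy exercise, so there is no authorial argument to compare against; your injection proof is precisely the kind of routine verification intended, and nothing is missing.
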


We consider the Walsh series expansion of the function $\chi_{[0,y)}$
\begin{align}
\chi_{[0,y)}(x) = \sum_{t = 0}^\infty \hat{\chi}_{[0,y)}(t) \wal_t(x),
\end{align}
where for $t \in \N_0$ with $b$-adic expansion $t = \tau_0 + \tau_1 b + \ldots + \tau_{\varrho(t) - 1} b^{\varrho(t) - 1}$, the $t$-th Walsh coefficient is given by
\[ \hat{\chi}_{[0,y)}(t) = \int_0^1 \chi_{[0,y)}(x) \overline{\wal_t(x)} \dint x = \int_0^y \overline{\wal_t(x)} \dint x. \]
For $t > 0$ we put $t = t' + \tau_{\varrho(t) - 1} b^{\varrho(t) - 1}$.

The following is called Fine-Price formulas and was first proved in \cite{F49} (dyadic case) and \cite{P57} ($b$-adic version). One often finds it in literature, e.g. see \cite[Lemma 14.8]{DP10} for an easy understandable proof.

\begin{lem} \label{chi_roof_lem}

Let $b \geq 2$ be an integer and $x \in [0,1)$. Then we have
\[ \hat{\chi}_{[0,y)}(0) = y = \frac{1}{2} + \sum_{a = 1}^\infty \sum_{z = 1}^{b-1} \frac{1}{b^a (e^{-\frac{2 \pi i}{b} z} - 1)} \wal_{z b^{a-1}}(y) \]
and for any integer $t > 0$ we have
\begin{multline*}
\hat{\chi}_{[0,y)}(t) = \frac{1}{b^{\varrho(t)}}\left( \frac{1}{1 - \e^{-\frac{2 \pi \im}{b} \tau_{\varrho(t) - 1}}} \overline{\wal_{t'}(y)} \right. +\\
+ \left( \frac{1}{\e^{-\frac{2 \pi \im}{b} \tau_{\varrho(t) - 1}} - 1} + \frac{1}{2} \right) \overline{\wal_t(y)} +\\
+ \left. \sum_{a = 1}^\infty \sum_{z = 1}^{b-1} \frac{1}{b^a (\e^{\frac{2 \pi \im}{b} z} - 1)} \overline{\wal_{z b^{\varrho(t)+a-1} + t}(y)} \right).
\end{multline*}

\end{lem}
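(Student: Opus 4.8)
The plan is to reduce the two assertions to a single core computation. First I would establish the formula for $\hat\chi_{[0,y)}(0)$, which is just the $b$-adic Walsh expansion of the identity function $g(y)=y$ on $[0,1)$, and then deduce the general formula for $t>0$ by a $b$-adic decomposition of the interval $[0,y)$ together with a reduction to the first case.

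To get the expansion of $g$, I would compute its Walsh coefficients $\widehat g(t)=\int_0^1 x\,\overline{\wal_t(x)}\,\dint x$ directly. Splitting $[0,1)$ into the $b^{\varrho(t)}$ intervals $I_{\varrho(t),m}$, on each of which $\wal_t$ is constant with value $\wal_t(mb^{-\varrho(t)})$, and using $\int_{I_{\varrho(t),m}}x\,\dint x=b^{-2\varrho(t)}(m+\tfrac12)$ together with $\sum_{m=0}^{b^{\varrho(t)}-1}\wal_t(mb^{-\varrho(t)})=0$ for $t\neq0$, one is left with $\widehat g(t)=b^{-2\varrho(t)}\sum_{m}m\,\overline{\wal_t(mb^{-\varrho(t)})}$. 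Writing $m$ in base $b$ and factorising the resulting exponential sum over the digits of $m$, this sum vanishes unless $t=zb^{a-1}$ is a one-digit index with $1\le z\le b-1$ and $a=\varrho(t)$, in which case it collapses to the elementary sum $\sum_{\mu=0}^{b-1}\mu\,\e^{-\frac{2\pi\im}{b}z\mu}=\frac{b}{\e^{-\frac{2\pi\im}{b}z}-1}$. Collecting the pieces gives $\widehat g(0)=\tfrac12$ and $\widehat g(zb^{a-1})=\frac{1}{b^{a}(\e^{-\frac{2\pi\im}{b}z}-1)}$, which is precisely the first displayed identity.

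For $t>0$ I would decompose $[0,y)$ along the $b$-adic digits $y=y_1b^{-1}+y_2b^{-2}+\cdots$ of $y$ as a disjoint union $\bigsqcup_{i\ge1}\bigsqcup_{c=0}^{y_i-1}I_{i,m_i(c)}$, where $I_{i,m_i(c)}$ is the level-$i$ $b$-adic interval whose left endpoint has digits $(y_1,\dots,y_{i-1},c)$. Since $\int_{I_{jm}}\overline{\wal_t(x)}\,\dint x$ equals $b^{-j}\,\overline{\wal_t(mb^{-j})}$ if $\varrho(t)\le j$ and vanishes otherwise, only levels $i\ge\varrho:=\varrho(t)$ survive. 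I would split off the critical level $i=\varrho$: there the value of $\wal_t$ at the endpoint with digits $(y_1,\dots,y_{\varrho-1},c)$ factors as $\wal_{t'}(y)\,\e^{\frac{2\pi\im}{b}\tau_{\varrho-1}c}$, so the $c$-sum is geometric, and using $\wal_t=\wal_{t'}\cdot\wal_{\tau_{\varrho-1}b^{\varrho-1}}$ this contribution rearranges into $\frac{1}{b^{\varrho}}\big(\frac{1}{1-\e^{-\frac{2\pi\im}{b}\tau_{\varrho-1}}}\,\overline{\wal_{t'}(y)}+\frac{1}{\e^{-\frac{2\pi\im}{b}\tau_{\varrho-1}}-1}\,\overline{\wal_t(y)}\big)$. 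For the tail $i>\varrho$ the value of $\wal_t$ is independent of $c$, so the $c$-sum contributes only a factor $y_i$, and the tail equals $\overline{\wal_t(y)}\sum_{i>\varrho}y_ib^{-i}=b^{-\varrho}\,\overline{\wal_t(y)}\,\{b^{\varrho}y\}$.

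It then remains to feed the identity-function expansion from the second step into $\{b^{\varrho}y\}$, whose $a$-th $b$-adic digit is $y_{\varrho+a}$, so that $\wal_{zb^{a-1}}(\{b^{\varrho}y\})=\wal_{zb^{\varrho+a-1}}(y)$; since $zb^{\varrho+a-1}$ and $t$ have disjoint $b$-adic digit supports, $\wal_{zb^{\varrho+a-1}}\wal_t=\wal_{zb^{\varrho+a-1}+t}$, and the substitution $z\mapsto b-z$ (which replaces $\e^{-\frac{2\pi\im}{b}z}-1$ by $\e^{\frac{2\pi\im}{b}z}-1$ and $\wal_{zb^{\varrho+a-1}}(y)$ by $\overline{\wal_{(b-z)b^{\varrho+a-1}}(y)}$) turns the double series into exactly the one in the claimed formula, while the constant term $\tfrac12$ of the expansion yields the extra $\tfrac12\,b^{-\varrho}\,\overline{\wal_t(y)}$ that combines with the critical-level term to give the coefficient $\frac{1}{b^{\varrho}}\big(\frac{1}{\e^{-\frac{2\pi\im}{b}\tau_{\varrho-1}}-1}+\frac12\big)$ of $\overline{\wal_t(y)}$. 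I expect the only real difficulty to be bookkeeping --- keeping straight the $b$-adic expansions of $b$-adic rationals, which digits each Walsh function actually depends on, and the $\F_b$-arithmetic behind the $z\mapsto b-z$ reflection; the analytic side is harmless, since $g$ and $\chi_{[0,y)}$ lie in $L_2([0,1))$ so the Walsh series and all rearrangements are legitimate, and the statement is in any case classical (see \cite[Lemma 14.8]{DP10}).
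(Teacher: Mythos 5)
Your argument is correct, and all the key computations check out: the coefficient $\widehat g(zb^{a-1})=\frac{1}{b^{a}(\e^{-2\pi\im z/b}-1)}$ via the geometric-derivative sum $\sum_{\mu=0}^{b-1}\mu\,\e^{-\frac{2\pi\im}{b}z\mu}=\frac{b}{\e^{-\frac{2\pi\im}{b}z}-1}$, the digit decomposition of $[0,y)$ with only levels $i\geq\varrho(t)$ surviving, the critical-level geometric sum producing $\frac{1}{1-\e^{-2\pi\im\tau_{\varrho-1}/b}}\overline{\wal_{t'}(y)}+\frac{1}{\e^{-2\pi\im\tau_{\varrho-1}/b}-1}\overline{\wal_{t}(y)}$, the tail $b^{-\varrho}\overline{\wal_t(y)}\{b^{\varrho}y\}$, and the $z\mapsto b-z$ reflection that converts $\wal_{zb^{\varrho+a-1}}(y)$ into $\overline{\wal_{zb^{\varrho+a-1}}(y)}$ with the conjugated denominator. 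Note, however, that the paper itself does not prove this lemma at all: it is quoted as the classical Fine--Price formula with references to \cite{F49}, \cite{P57} and to \cite[Lemma A.22, Lemma 14.8]{DP10}, so there is no internal proof to compare against; your derivation is essentially the standard textbook argument behind those citations (direct evaluation of $\int_0^y\overline{\wal_t}$ by $b$-adic digit decomposition, reduced to the Walsh expansion of the identity function). One small point worth making explicit if you write this up: $L_2$ convergence alone gives the first identity only almost everywhere, but since the coefficients are $O(b^{-a})$ the series converges absolutely and uniformly (indeed the level-$A$ partial sum equals $b^{-A}\lfloor b^{A}y\rfloor+\tfrac12 b^{-A}$), which yields the pointwise equality for every $y\in[0,1)$ and also licenses all the rearrangements you perform.
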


The first part of the Lemma is \cite[Lemma A.22]{DP10}, the second is \cite[Lemma 14.8]{DP10}.

Let $n \in \N_0$, we consider the approximation of $\chi_{[0,y)}$ by the truncated series
\begin{align}
\chi_{[0,y)}^{(n)}(x) = \sum_{t = 0}^{b^n - 1} \hat{\chi}_{[0,y)}(t) \wal_t(x).
\end{align}
Let $N$ be a positive integer. Then we put for some point set $\P$ in $[0,1)^d$ with $N$ points
\begin{align}
\Theta_{\P}(y) = \frac{1}{N} \sum_{z \in \P} \chi_{[0,y)}^{(n)}(z) - y_1 \cdot \ldots \cdot y_d.
\end{align}
Let
\begin{align} \label{split}
D_{\P}(y) = \Theta_{\P}(y) + R_{\P}(y).
\end{align}
We now restrict ourselves again to the case where $b$ is prime.

\begin{lem} \label{lem_duality_into_disc}

Let $\{ x_0, \ldots, x_{b^n - 1} \}$ be a digital net in base $b$ generated by the matrices $C_1, \ldots, C_d$. Then for $t \in \{ 0, \ldots, b^n - 1 \}^d$, we have
\[ \sum_{h = 0}^{b^n - 1} \wal_t(x_h) = \begin{cases} b^n & \text{ if } t \in \Dn(C_1, \ldots, C_d), \\ 0 & \text{ otherwise}. \end{cases} \]

\end{lem}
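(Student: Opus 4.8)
The plan is to evaluate $\wal_t(x_h)$ explicitly from the digital construction and then to appeal to the orthogonality of the additive characters of $\F_b^n$; this is the standard argument identifying character sums over a digital net with its dual code.

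First I would unwind the construction. Write $\bar h = (h_0, \ldots, h_{n-1})^{\top} \in \F_b^n$ for the digit vector of $h$, so that $x_h^i = h_{h,i,1} b^{-1} + \ldots + h_{h,i,n} b^{-n}$ with $(h_{h,i,1}, \ldots, h_{h,i,n})^{\top} = C_i \bar h$. For $t_i \in \{0, \ldots, b^n - 1\}$ with digit vector $\bar t_i = (t_{i,0}, \ldots, t_{i,n-1})^{\top}$, the definition of the $b$-adic Walsh function reads, modulo $b$ in the exponent,
\[ \wal_{t_i}(x_h^i) = e^{\frac{2\pi i}{b} \sum_{k=0}^{n-1} t_{i,k} h_{h,i,k+1}} = e^{\frac{2\pi i}{b}\, \bar t_i^{\top} C_i \bar h} = e^{\frac{2\pi i}{b}\, (C_i^{\top} \bar t_i)^{\top} \bar h}, \]
and multiplying over $i = 1, \ldots, d$ gives $\wal_t(x_h) = e^{\frac{2\pi i}{b}\, c^{\top} \bar h}$ with $c := C_1^{\top} \bar t_1 + \ldots + C_d^{\top} \bar t_d \in \F_b^n$ independent of $h$.

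Next, since $\bar h$ ranges over all of $\F_b^n$ as $h$ ranges over $\{0, \ldots, b^n - 1\}$, I would write
\[ \sum_{h = 0}^{b^n - 1} \wal_t(x_h) = \sum_{\bar h \in \F_b^n} e^{\frac{2\pi i}{b}\, c^{\top} \bar h} = \prod_{k = 1}^{n} \Big( \sum_{g \in \F_b} e^{\frac{2\pi i}{b} c_k g} \Big), \]
each factor being $b$ if $c_k = 0$ and $0$ otherwise (a complete geometric sum over the $b$-th roots of unity). Hence the total sum is $b^n$ when $c = 0$ and $0$ when $c \neq 0$. Finally, $c = 0$ is exactly the defining condition $C_1^{\top} \bar t_1 + \ldots + C_d^{\top} \bar t_d = 0$ for membership of $t$ in $\Dn(C_1, \ldots, C_d)$ — note that this set, in contrast to $\Dn'(C_1,\ldots,C_d)$, contains $t = 0$, consistently with $\wal_0 \equiv 1$ — which gives the claim.

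The only delicate point is the index bookkeeping in the first step: the $b$-adic Walsh function pairs the $k$-th digit of the frequency $t_i$ with the $(k+1)$-st (most-significant-end) digit of the argument $x_h^i$, while the digital method delivers precisely those argument digits as the entries of $C_i \bar h$. One must therefore verify that the resulting bilinear form is $\bar t_i^{\top}(C_i \bar h) = (C_i^{\top}\bar t_i)^{\top}\bar h$, with the transpose landing on $C_i$, so that summing over $i$ produces the linear condition defining $\Dn$. Once the conventions are aligned there is no genuine obstacle: the remainder is the routine orthogonality of characters on $\F_b^n$.
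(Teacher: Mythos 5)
Your proof is correct and is essentially the same argument as the one the paper points to (it cites \cite[Section 4.4]{DP10} rather than proving the lemma itself): rewrite $\wal_t(x_h)$ as the additive character $e^{\frac{2\pi \im}{b}(C_1^{\top}\bar t_1+\ldots+C_d^{\top}\bar t_d)^{\top}\bar h}$ and use orthogonality of characters on $\F_b^n$. Your bookkeeping of the digit pairing and the remark that $\Dn$ (unlike $\Dn'$) contains $t=0$ are both accurate, so nothing further is needed.
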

The proof of this fact can be found in \cite[Section 4.4]{DP10}

\begin{lem} \label{theta_lem}

Let $\C$ be an $\F_b$-linear subspace of $\F_b^{dn}$ of dimension $n$ and let $\P = \Phi_n^d(\C)$ denote the corresponding digital net in base $b$ with generating matrices $C_1, \ldots, C_d$. Then
\[ \Theta_{\P}(y) = \sum_{t \in \Dn'(C_1, \ldots, C_d)} \hat{\chi}_{[0,y)}(t). \]

\end{lem}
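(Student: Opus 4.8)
The plan is to expand, interchange the (finite) summations, apply the duality relation of Lemma~\ref{lem_duality_into_disc}, and then peel off the term $t = 0$. Concretely, I would start from
\[ \Theta_{\P}(y) = \frac{1}{N} \sum_{z \in \P} \chi_{[0,y)}^{(n)}(z) - y_1 \cdot \ldots \cdot y_d \]
and insert the truncated Walsh expansion $\chi_{[0,y)}^{(n)}(z) = \sum_{t \in \{0,\ldots,b^n-1\}^d} \hat{\chi}_{[0,y)}(t) \, \wal_t(z)$, which is just the $d$-fold tensor product of the one-dimensional truncation. Since both sums are finite, they may be interchanged without any convergence concern, giving
\[ \Theta_{\P}(y) = \sum_{t \in \{0,\ldots,b^n-1\}^d} \hat{\chi}_{[0,y)}(t) \Bigl( \frac{1}{b^n} \sum_{z \in \P} \wal_t(z) \Bigr) - y_1 \cdot \ldots \cdot y_d. \]
Then I would invoke Lemma~\ref{lem_duality_into_disc} in its coordinatewise (tensor product) form, which is applicable because $\P = \Phi_n^d(\C)$ is a digital net in base $b$ with some generating matrices $C_1,\ldots,C_d$, as recalled at the end of Section~3. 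It tells us that the bracketed average equals $1$ when $t \in \Dn(C_1,\ldots,C_d)$ and $0$ otherwise, so the sum collapses to $\sum_{t \in \Dn(C_1,\ldots,C_d)} \hat{\chi}_{[0,y)}(t) - y_1 \cdot \ldots \cdot y_d$.

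The final step is to single out $t = 0$. Since $0 \in \Dn(C_1,\ldots,C_d)$ and
\[ \hat{\chi}_{[0,y)}(0) = \int_{[0,1)^d} \chi_{[0,y)}(x) \, \dint x = y_1 \cdot \ldots \cdot y_d, \]
the $t = 0$ term exactly cancels the polynomial term subtracted in the definition of $\Theta_{\P}$. What is left is the sum over $\Dn(C_1,\ldots,C_d) \setminus \{0\} = \Dn'(C_1,\ldots,C_d)$, which is precisely the claimed identity.

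I do not expect a serious obstacle here: the lemma is essentially a reorganization of finite sums. The only points that need some care are (a) confirming that $\Phi_n^d(\C)$ genuinely is a digital net so that Lemma~\ref{lem_duality_into_disc} applies, which is exactly \cite[Theorem 7.14]{DP10} as quoted; (b) passing to the tensor product versions of both the truncated Walsh series and Lemma~\ref{lem_duality_into_disc}, which follow coordinatewise from the one-dimensional statements together with $\hat{\chi}_{[0,y)}(t) = \prod_{i} \hat{\chi}_{[0,y_i)}(t_i)$; and (c) matching the removal of $\{0\}$ in the definition of $\Dn'$ against the single Walsh coefficient $\hat{\chi}_{[0,y)}(0) = y_1 \cdot \ldots \cdot y_d$ that is responsible for the volume term in $\Theta_{\P}$.
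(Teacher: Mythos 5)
Your proposal is correct and is exactly the standard argument that the paper delegates to the proof of \cite[Lemma 16.22]{DP10}: expand the truncated Walsh series, exchange the finite sums, apply the character-sum property of the digital net (Lemma \ref{lem_duality_into_disc}), and cancel the volume term against the coefficient $\hat{\chi}_{[0,y)}(0)$, leaving the sum over $\Dn'(C_1,\ldots,C_d)$. No gaps; the points (a)--(c) you flag are indeed the only things to check and you handle them correctly.
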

The proof of this fact is contained in the proof of \cite[Lemma 16.22]{DP10}.

\begin{lem} \label{rest_lem}

There exists a constant $c > 0$ such that, for any $n \in \N_0$ and for any $\F_b$-linear subspace $\C$ of $\F_b^{dn}$ of dimension $n$ with dual space $\C^{\perp}$ satisfying $\delta_n(\C^{\perp}) \geq n + 1$ with the corresponding digital net $\P = \Phi_n(\C)$ and for every $y \in [0,1)^d$, we have
\[ |R_{\P}(y)| \leq c \, b^{-n}. \]

\end{lem}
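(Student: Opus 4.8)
The plan is to realize $R_\P$ as a point-evaluation average of the difference $\chi_{[0,y)}-\chi_{[0,y)}^{(n)}$ and to exploit that this difference vanishes at a net point unless the boundary of $[0,y)$ cuts through the small $b$-adic box around that point. Since $D_\P(y)=\frac{1}{b^n}\sum_{z\in\P}\chi_{[0,y)}(z)-y_1\cdots y_d$ and, by definition, $\Theta_\P(y)=\frac{1}{b^n}\sum_{z\in\P}\chi_{[0,y)}^{(n)}(z)-y_1\cdots y_d$, subtracting in \eqref{split} cancels the volume terms and yields
\[ R_\P(y)=\frac{1}{b^n}\sum_{z\in\P}\bigl(\chi_{[0,y)}(z)-\chi_{[0,y)}^{(n)}(z)\bigr), \]
so it suffices to bound the individual summands and to count the net points that contribute.

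First I would identify the truncation $\chi_{[0,y)}^{(n)}$. For $0\le t<b^n$ we have $\varrho(t)\le n$, so each $\wal_t$ is constant on every $b$-adic interval of length $b^{-n}$; these $b^n$ Walsh functions are orthonormal, hence form an orthonormal basis of the $b^n$-dimensional space $V_n$ of all such step functions, and the analogous statement holds for the $d$-fold tensor products. Consequently $\chi_{[0,y)}^{(n)}$ is precisely the $L_2$-orthogonal projection of $\chi_{[0,y)}$ onto $V_n\otimes\cdots\otimes V_n$, which acts by local averaging (equivalently, the Walsh--Dirichlet kernel satisfies $\sum_{t<b^n}\wal_t(x)\overline{\wal_t(u)}=b^n\1[\lfloor b^nx\rfloor=\lfloor b^nu\rfloor]$): writing $m_i=\lfloor b^nz_i\rfloor$ and $Q=I_{nm_1}\times\cdots\times I_{nm_d}$ for the $b$-adic box of volume $b^{-dn}$ containing $z$, one gets
\[ \chi_{[0,y)}^{(n)}(z)=b^{dn}\,\bigl|Q\cap[0,y)\bigr|=\prod_{i=1}^d b^n\bigl|I_{nm_i}\cap[0,y_i)\bigr|\in[0,1]. \]
If $y_i\notin\mathrm{int}\,I_{nm_i}$ for every $i$, then each factor $b^n|I_{nm_i}\cap[0,y_i)|$ already equals $0$ or $1$, namely $\chi_{[0,y_i)}(z_i)$, so $\chi_{[0,y)}^{(n)}(z)=\chi_{[0,y)}(z)$; and since both values always lie in $[0,1]$ they differ by at most $1$. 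Hence
\[ \bigl|\chi_{[0,y)}(z)-\chi_{[0,y)}^{(n)}(z)\bigr|\le\sum_{i=1}^d\1\bigl[y_i\in\mathrm{int}\,I_{nm_i}\bigr]. \]

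Finally I would count net points. The hypotheses $\dim\C=n$ and $\delta_n(\C^\perp)\ge n+1$ are exactly what forces $\P=\Phi_n^d(\C)$ to be a net in base $b$ (this is the content of \cite[Theorem 7.14]{DP10} recalled after \eqref{dualspace}), i.e. every $b$-adic box of volume $b^{-n}$ contains exactly one point of $\P$. For a fixed coordinate $i$, the box whose $i$-th edge is a prescribed $b$-adic interval of length $b^{-n}$ and whose remaining edges are $[0,1)$ has volume $b^{-n}$, hence contains exactly one point of $\P$; therefore at most one $z\in\P$ can satisfy $y_i\in\mathrm{int}\,I_{nm_i}$, so that $\sum_{z\in\P}\1[y_i\in\mathrm{int}\,I_{nm_i}]\le1$ for each $i$ (and this sum is $0$ when $y_i$ is a multiple of $b^{-n}$). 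Summing the displayed estimate over $z\in\P$ gives $\sum_{z\in\P}|\chi_{[0,y)}(z)-\chi_{[0,y)}^{(n)}(z)|\le d$, and dividing by $b^n$ yields $|R_\P(y)|\le d\,b^{-n}$, i.e. the lemma with $c=d$. I expect no serious obstacle here; the only mildly delicate point is the identification of $\chi_{[0,y)}^{(n)}$ with the local-averaging (Walsh--Dirichlet) projection, together with the harmless bookkeeping when some $y_i$ happens to be a $b$-adic rational with denominator $b^n$.
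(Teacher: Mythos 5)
Your argument is correct. The paper does not prove this lemma at all \textemdash{} it simply refers the reader to \cite[Lemma 16.21]{DP10} \textemdash{} so what you have written is a self-contained substitute for that citation, and it is essentially the standard argument behind it: from the definitions, $R_{\P}(y)=b^{-n}\sum_{z\in\P}\bigl(\chi_{[0,y)}(z)-\chi_{[0,y)}^{(n)}(z)\bigr)$; the truncated series (over $t\in\{0,\ldots,b^n-1\}^d$, which is the correct reading of the paper's notation, consistent with Lemma \ref{theta_lem}) is the local-averaging projection onto functions constant on $b$-adic boxes of side $b^{-n}$, so the summand vanishes unless some coordinate $y_i$ lies in the interior of the $i$-th edge of the box of side $b^{-n}$ containing $z$, and is at most $1$ in absolute value in any case; and the $(0,n,d)$-net property applied to the boxes $[0,1)^{i-1}\times I_{n,m^*}\times[0,1)^{d-i}$ shows that for each $i$ at most one point of $\P$ can be affected, giving $|R_{\P}(y)|\le d\,b^{-n}$. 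Your side remark is also accurate that only $\dim\C=n$ together with $\delta_n(\C^{\perp})\ge n+1$ is needed to guarantee the net property (this is what the lemma's phrase ``corresponding digital net'' presupposes, via \cite[Theorem 7.14]{DP10}); the Hamming-weight condition in \eqref{dualspace} plays no role here. Compared with the bare citation in the paper, your route has the small added benefit of an explicit constant $c=d$.
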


For a proof of this lemma the interested reader is referred to \cite[Lemma 16.21]{DP10}.

We introduce a very common notation. For functions $f, g \in L_2(\Q^d)$ we write
\[ \left\langle f, g \right\rangle = \int_{\Q^d} f \, \bar{g}. \]

\begin{prp} \label{prp_min1}

Let $j = (-1, \ldots, -1), \, m = (0, \ldots, 0), \, l = (1, \ldots, 1)$. Then there exists a constant $c > 0$ independent of $n$ such that
\[ |\mu_{jml}(D_{\CS_n})| \leq c \, b^{-n}. \]

\end{prp}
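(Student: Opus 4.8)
Since $j=(-1,\dots,-1)$, the Haar function $h_{jml}$ is simply the indicator function $\1_{[0,1)^d}$ of the whole cube, so $\mu_{jml}(D_{\CS_n})=\int_{[0,1)^d}D_{\CS_n}(y)\,\dint y$ and the statement reduces to $\left|\int_{[0,1)^d}D_{\CS_n}(y)\,\dint y\right|\le c\,b^{-n}$. The plan is to use the splitting \eqref{split}, $D_{\CS_n}=\Theta_{\CS_n}+R_{\CS_n}$. By \eqref{dualspace} we have $\delta_n(\C_n^{\perp})\ge n+1$, so Lemma~\ref{rest_lem} gives $|R_{\CS_n}(y)|\le c\,b^{-n}$ pointwise, hence $\left|\int_{[0,1)^d}R_{\CS_n}\right|\le c\,b^{-n}$. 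It therefore suffices to show that $\int_{[0,1)^d}\Theta_{\CS_n}(y)\,\dint y=0$.

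For this I would invoke Lemma~\ref{theta_lem} and the product structure $\hat{\chi}_{[0,y)}(t)=\prod_{i=1}^d\hat{\chi}_{[0,y_i)}(t_i)$ to write
\[ \int_{[0,1)^d}\Theta_{\CS_n}(y)\,\dint y=\sum_{t\in\Dn'(C_1,\dots,C_d)}\ \prod_{i=1}^d\int_0^1\hat{\chi}_{[0,y_i)}(t_i)\,\dint y_i. \]
Next I would show that the single-variable integral $\int_0^1\hat{\chi}_{[0,y)}(t_i)\,\dint y$ vanishes whenever $t_i$ has at least two nonzero $b$-adic digits, i.e. $\varkappa(t_i)\ge 2$. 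Indeed, reading $y\mapsto\hat{\chi}_{[0,y)}(t_i)$ as a Walsh series via Lemma~\ref{chi_roof_lem}, the Walsh indices that occur are $t_i'$, $t_i$ and $zb^{\varrho(t_i)+a-1}+t_i$; when $\varkappa(t_i)\ge 2$ all of these are strictly positive (deleting the leading digit of $t_i$ still leaves a nonzero number, so $t_i'>0$), and since $\int_0^1\wal_s=0$ for $s>0$ the whole integral is $0$. Consequently a summand in the display can be nonzero only if $\varkappa(t_i)\le 1$ for every $i$, that is, only if $\varkappa^d(t)\le d$.

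Finally I would argue that no $t\in\Dn'(C_1,\dots,C_d)$ has $\varkappa^d(t)\le d$. Since $\CS_n=\Phi_n^d(\C_n)$ is the digital net with generating matrices $C_1,\dots,C_d$ and $\Phi_n^d$ is injective, one gets $\C_n=\{(C_1\bar r,\dots,C_d\bar r):\bar r\in\F_b^n\}$, hence $\C_n^{\perp}=\{(b_1,\dots,b_d)\in\F_b^{dn}:\sum_{i=1}^dC_i^{\top}b_i=0\}$; thus the digit-vector map $t\mapsto(\bar t_1,\dots,\bar t_d)$ is a bijection of $\Dn'(C_1,\dots,C_d)$ onto $\C_n^{\perp}\setminus\{0\}$ preserving the Hamming weight, $\varkappa^d(t)=\varkappa_n^d((\bar t_1,\dots,\bar t_d))$. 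By \eqref{dualspace}, $\varkappa_n(\C_n^{\perp})\ge 2d+1$, so every $t\in\Dn'(C_1,\dots,C_d)$ satisfies $\varkappa^d(t)\ge 2d+1>d$. Therefore every summand vanishes, $\int_{[0,1)^d}\Theta_{\CS_n}=0$, and combined with the bound on $R_{\CS_n}$ the proposition follows.

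The main obstacle is the vanishing of $\int_0^1\hat{\chi}_{[0,y)}(t_i)\,\dint y$ for $\varkappa(t_i)\ge 2$: one has to inspect each of the three families of Walsh indices produced by the Fine--Price formula and check that none of them equals $0$ in that case, so that orthonormality of the Walsh system kills the integral. Everything else is bookkeeping, and the conceptual point is the clash between the ``at most $d$ nonzero digits'' needed for a nonvanishing contribution and the ``at least $2d+1$ nonzero digits'' forced by the dual Hamming-weight bound $\varkappa_n(\C_n^{\perp})\ge 2d+1$ of the Chen--Skriganov code, which makes the $\Theta$-part vanish identically rather than merely being small.
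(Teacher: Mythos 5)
Your proof is correct and follows essentially the same route as the paper: split $D_{\CS_n}=\Theta_{\CS_n}+R_{\CS_n}$, bound the remainder via Lemma~\ref{rest_lem}, and kill the $\Theta$-part by observing that a nonzero contribution from $t\in\Dn'(C_1,\dots,C_d)$ forces $\varkappa(t_i)\le 1$ in every coordinate (your integral $\int_0^1\hat{\chi}_{[0,y)}(t_i)\,\dint y$ is exactly the paper's $\langle\hat{\chi}_{[0,\cdot)}(t_i),\wal_0\rangle$, and $t_i'\ne 0$ is the same condition as $\varkappa(t_i)\ge 2$), which contradicts $\varkappa_n(\C_n^{\perp})\ge 2d+1$. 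The only difference is that you spell out the identification of $\Dn'(C_1,\dots,C_d)$ with $\C_n^{\perp}\setminus\{0\}$, which the paper leaves implicit.
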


\begin{proof}
As in \eqref{split} we split $D_{\CS_n}(y) = \Theta_{\CS_n}(y) + R_{\CS_n}(y)$ and we know from Lemma \ref{rest_lem} (since $\CS_n$ is a digital net) that there is a constant $c > 0$ such that $|R_{\CS_n}(y)| \leq c \, b^{-n}$. Using Lemma \ref{theta_lem} we can calculate the Haar coefficient
\[ \mu_{jml}(D_{\CS_n}) = \left\langle \Theta_{\CS_n} + R_{\CS_n} , h_{jml} \right\rangle. \]
To do so we use the fact that $h_{jml} = \wal_{(0, \ldots, 0)}$. Now we consider the one-dimensional case first and from the first part of Lemma \ref{chi_roof_lem} we get
\[ \left\langle \hat{\chi}_{[0,\cdot)}(0),\wal_0 \right\rangle = \frac{1}{2}. \]
Now let $t > 0$. Then by the second part of Lemma \ref{chi_roof_lem} we have
\[ \left\langle \hat{\chi}_{[0,\cdot)}(t),\wal_0 \right\rangle = \begin{cases}
\frac{1}{b^{\varrho(t)}} \frac{1}{1 - e^{-\frac{2 \pi i}{b} \tau_{\varrho(t)-1}}} & t' = 0, \\
0 & t' \neq 0.
\end{cases} \]
This means that we can find a constant $c_1 > 0$ such that for every integer $t \geq 0$ we have
\[ \left| \left\langle \hat{\chi}_{[0,\cdot)}(t),\wal_0 \right\rangle \right| \leq c_1 \, b^{-\varrho(t)} \]
and
\[ \left\langle \hat{\chi}_{[0,\cdot)}(t),\wal_0 \right\rangle = 0 \]
if $t > 0$ and $t' \neq 0$.

Now suppose, we have some $t \in \Dn'(C_1, \ldots, C_d)$ such that
\[ \left\langle \hat{\chi}_{[0,\cdot)}(t), \wal_{(0, \ldots, 0)} \right\rangle \neq 0. \]
Then for all $1 \leq i \leq d$ we have
\[ \left\langle \hat{\chi}_{[0,\cdot)}(t_i), \wal_0 \right\rangle \neq 0. \]
Then necessarily $t_i = \tau_i b^{\varrho(t_i)-1}$ (since $t_i' = 0$) or $t_i = 0$ for any $i = 1, \ldots, d$ which means that either $\varkappa(t_i) = 1$ or $\varkappa(t_i) = 0$. In any case we have $\varkappa^d(t) \leq d$ which contradicts to $\varkappa_n(\mathcal{C}_n^{\bot}) \geq 2d + 1$ as must be the case according to above. Therefore, for all $t \in \Dn'(C_1, \ldots, C_d)$ we have
\[ \left\langle \hat{\chi}_{[0,\cdot)}(t), \wal_{(0, \ldots, 0)} \right\rangle = 0 \]
and from Lemma \ref{theta_lem} follows $\left\langle \Theta_{\CS_n}, \wal_0 \right\rangle = 0$.
Hence we have
\[ |\mu_{jml}(D_{\CS_n})| \leq |\left\langle \Theta_{\CS_n} , \wal_0 \right\rangle| + |\left\langle R_{\CS_n},\wal_0 \right\rangle| \leq c \, b^{-n}. \]

\end{proof}

\begin{lem} \label{lem_scalprod_1}

Let $j \in \N_{-1}$, $m \in \D_j$, $l \in \B_j$ and $\alpha \in \N_0$. Then
\begin{enumerate}[(i)]
  \item if $j \in \N_0$ and $\varrho(\alpha) = j + 1$ and $\alpha_{j} = l$. Then
\[ |\left\langle h_{jml} , \wal_{\alpha} \right\rangle| = b^{-j}, \]
  \item if $j = -1, \, m = l = 0$ and $\alpha = 0$ then
\[ \left\langle h_{jml} , \wal_{\alpha} \right\rangle| = 1, \]
  \item if $\varrho(\alpha) \neq j + 1$ or $\alpha_{j} \neq l$ then
\[ |\left\langle h_{jml} , \wal_{\alpha} \right\rangle| = 0. \]
\end{enumerate}

\end{lem}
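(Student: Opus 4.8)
The plan is to exploit the piecewise-constant structure of both families of functions. For $j \in \N_0$ the Haar function $h_{jml}$ is supported on $I_{jm}$ and constant, with value $e^{\frac{2\pi i}{b}lk}$, on each of its $b$ children $I_{jm}^k$ ($k = 0, \ldots, b-1$) of length $b^{-j-1}$, whereas $\wal_{\alpha}$ is constant on every $b$-adic interval of length $b^{-\varrho(\alpha)}$. Everything will then reduce to the elementary identity $\sum_{k=0}^{b-1} e^{\frac{2\pi i}{b}ck}$, which equals $b$ if $b \mid c$ and $0$ otherwise. I would organise the computation according to how $\varrho(\alpha)$ compares with $j+1$, after disposing of the degenerate case $j = -1$.

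For $j = -1$ the function $h_{jml}$ is the indicator function of $[0,1)$, so $\langle h_{jml}, \wal_{\alpha}\rangle = \overline{\int_0^1 \wal_{\alpha}(x)\,\dint x}$, which is $1$ if $\alpha = 0$ (this is case (ii)) and $0$ otherwise, the latter falling under case (iii) since then $\varrho(\alpha) \geq 1 \neq 0 = j+1$. Assume now $j \in \N_0$. If $\varrho(\alpha) \leq j$, then $\wal_{\alpha}$ is constant on $I_{jm}$, and pulling it out of the integral leaves $\int_{I_{jm}} h_{jml} = b^{-j-1}\sum_{k=0}^{b-1} e^{\frac{2\pi i}{b}lk} = 0$ because $l \in \{1, \ldots, b-1\}$; this is again case (iii). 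If $\varrho(\alpha) = j+1$, I would split the integral over the children $I_{jm}^k$: on $I_{jm}^k$ the first $j+1$ base-$b$ digits of $x$ are pinned (the first $j$ by $m$, the $(j+1)$-st equal to $k$), so $\wal_{\alpha}$ is constant there with value $e^{\frac{2\pi i}{b}(c_m + \alpha_j k)}$ for some $c_m$ depending only on $m$ and $\alpha$; summing yields $\langle h_{jml}, \wal_{\alpha}\rangle = b^{-j-1}e^{-\frac{2\pi i}{b}c_m}\sum_{k=0}^{b-1} e^{\frac{2\pi i}{b}(l-\alpha_j)k}$, which has modulus $b^{-j}$ when $\alpha_j = l$ (case (i)) and vanishes when $\alpha_j \neq l$ since then $0 < |l - \alpha_j| < b$ (case (iii)). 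Finally, if $\varrho(\alpha) \geq j+2$, then $h_{jml}$ is constant on $I_{jm}^k$ while $I_{jm}^k$ is a $b$-adic interval strictly coarser than the resolution $b^{-\varrho(\alpha)}$ of $\wal_{\alpha}$; the substitution $x = b^{-j-1}(bm+k+y)$, $y \in [0,1)$, turns $\int_{I_{jm}^k}\overline{\wal_{\alpha}}$ into $b^{-j-1}$ times a unimodular constant times $\int_0^1 \overline{\wal_{\alpha''}(y)}\,\dint y$, where $\alpha'' = \sum_{r \geq 0}\alpha_{j+1+r}b^r \neq 0$ because its leading digit $\alpha_{\varrho(\alpha)-1}$ is nonzero, hence this integral is $0$; summing over $k$ completes case (iii).

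The argument is entirely elementary and I do not expect a genuine obstacle. The one step that needs a little care is the last case, where one must use that integrating $\wal_{\alpha}$ over a $b$-adic interval strictly coarser than $b^{-\varrho(\alpha)}$ gives $0$; this relies on the multiplicative digit-shift structure of the Walsh functions rather than just their local constancy. Beyond that, the main thing is to keep the bookkeeping of digit positions straight and to remember that it is the condition $l \neq 0$ that forces the mean of $h_{jml}$ over its support to vanish.
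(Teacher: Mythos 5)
Your proof is correct and follows essentially the same route as the paper: compare $\varrho(\alpha)$ with $j+1$, use local constancy of $h_{jml}$ on the children $I_{jm}^k$ and of $\wal_\alpha$ on $b$-adic intervals of length $b^{-\varrho(\alpha)}$, and reduce everything to the root-of-unity sum $\sum_{k=0}^{b-1}e^{\frac{2\pi i}{b}(l-\alpha_j)k}$. If anything, you supply more detail than the paper in the cases $\varrho(\alpha)\neq j+1$ (the zero-mean of $h_{jml}$ when $\varrho(\alpha)\le j$, and the digit-shift argument showing $\int_0^1\wal_{\alpha''}=0$ when $\varrho(\alpha)\ge j+2$), which the paper only asserts implicitly.
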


\begin{proof}
The second claim and the third for $j = -1$ are trivial so let $j \geq 0$. Let $y \in [0,1)$. We expand $\alpha$ and $y$ as
\[ \alpha = \alpha_0 + \alpha_1 b + \ldots + \alpha_{\varrho(\alpha)-1} b^{\varrho(\alpha)-1} \]
and
\[ y = y_1 b^{-1} + y_2 b^{-2} + \ldots. \]
Hence
\[ \wal_{\alpha} (y) = e^{\frac{2 \pi i}{b} (\alpha_0 y_1 + \ldots + \alpha_{\varrho(\alpha)-1} y_{\varrho(\alpha)})}. \]
The function $\wal_{\alpha}$ is constant on the intervals
\[ \left[\right.b^{-\varrho(\alpha)} \delta , b^{-\varrho(\alpha)} (\delta + 1)\left.\right) \]
for any integer $0 \leq \delta < b^{\varrho(\alpha)}$. The function $h_{jml}$ is constant on the intervals
\[ I_{jm}^k = \left[\right.b^{-j-1} (bm+k) , b^{-j-1} (bm+k+1)\left.\right) \]
for any integer $0 \leq k < b$. Now suppose that either $j+1 > \varrho(\alpha)$ or $j+1 < \varrho(\alpha)$. This would mean that either
\[ I_{jm} = \left[\right. b^{-j} m , b^{-j} (m+1)\left.\right) \subseteq \left[\right.b^{-\varrho(\alpha)} \delta , b^{-\varrho(\alpha)} (\delta + 1)\left. \right) \]
in the first case or
\[ \left[\right. b^{-\varrho(\alpha)} \delta , b^{-\varrho(\alpha)} (\delta + 1)\left. \right) \subset I_{jm}^k \]
for some $k$ in the second case or in both cases
\[ \left[\right. b^{-j} m , b^{-j} (m+1)\left. \right) \cap \left[\right. b^{-\varrho(\alpha)} \delta , b^{-\varrho(\alpha)} (\delta + 1)\left. \right) = \emptyset \]
In any case
\[ \left\langle h_{jml} , \wal_{\alpha} \right\rangle = 0. \]
So the only relevant case is $j+1 = \varrho(\alpha)$. Then either again
\[ \left[\right. b^{-j} m , b^{-j} (m+1)\left.\right) \cap \left[\right.b^{-\varrho(\alpha)} \delta , b^{-\varrho(\alpha)} (\delta + 1)\left. \right) = \emptyset \]
or
\[ \left[\right. b^{-\varrho(\alpha)} \delta , b^{-\varrho(\alpha)} (\delta + 1)\left. \right) = I_{jm}^k \]
for some $k$. We consider the last possibility.
The value of $h_{jml}$ on $I_{jm}^k$ is $e^{\frac{2 \pi i}{b} l k}$. To calculate the value of $\wal_{\alpha}$ we expand $m$ as
\[ m = m_1 + m_2 b + \ldots + m_j b^{j-1}. \]
Clearly, $0 \leq b m + k < b^{j+1}$. Hence,
\[ b^{-j-1} (b m + k) = m_j b^{-1} + \ldots + m_2 b^{-j+1} + m_1 b^{-j} + k b^{-j-1}. \]
So,
\[ \wal_{\alpha} (b^{-j-1} (b m + k)) = e^{\frac{2 \pi i}{b} \alpha_0 m_j + \ldots + \alpha_{j-1} m_1 + \alpha_j k}. \]
Now we can calculate
\begin{align*}
\overline{\left\langle h_{jml} , \wal_{\alpha} \right\rangle} & = \int_{I_{jm}} \overline{h_{jml}(y)} \wal_{\alpha}(y) \dint y\\
& = \sum_{k=0}^{b-1} \int_{I_{jm}^k} \overline{h_{jml}(y)} \wal_{\alpha}(y) \dint y\\
& = b^{-j-1} \sum_{k=0}^{b-1} e^{\frac{2 \pi i}{b} \alpha_0 m_j + \ldots + \alpha_{j-1} m_1 + (\alpha_j-l) k}\\
& = b^{-j-1} e^{\frac{2 \pi i}{b} \alpha_0 m_j + \ldots + \alpha_{j-1} m_1} \sum_{k=0}^{b-1} e^{(\alpha_j-l) k}\\
& = \begin{cases}
b^{-j} e^{\frac{2 \pi i}{b} \alpha_0 m_j + \ldots + \alpha_{j-1} m_1} & \alpha_j = l,\\
0 & \alpha_j \neq l
\end{cases}
\end{align*}
and the lemma follows.

\end{proof}

\begin{lem} \label{lem_scalprod_2}

There exists a constant $c > 0$ with the following property. Let $t, \alpha \in \N_0$. Then if $\alpha = t'$ or $\alpha = t + \tau \, b^{\varrho(t) + a - 1}$ for some integer $0 \leq \tau \leq b - 1$ and $a \geq 1$ then
\[ \left| \left\langle \hat{\chi}_{[0,\cdot)}(t) , \wal_{\alpha} \right\rangle \right| \leq c \, b^{-\max(\varrho(t), \varrho(\alpha))}. \]
If $\alpha \neq t'$ and there are no integers $0 \leq \tau \leq b - 1$ and $a \geq 1$ such that $\alpha = t + \tau \, b^{\varrho(t) +a - 1}$ then
\[ \left\langle \hat{\chi}_{[0,\cdot)}(t) , \wal_{\alpha} \right\rangle = 0. \]

\end{lem}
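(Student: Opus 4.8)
The plan is to read the scalar product straight off the Fine--Price expansion of $\hat{\chi}_{[0,\cdot)}(t)$ supplied by Lemma~\ref{chi_roof_lem}. Regarded as a function of $y$, $\hat{\chi}_{[0,\cdot)}(t)$ is bounded, hence lies in $L_2([0,1))$, and Lemma~\ref{chi_roof_lem} writes it as an $L_2$-convergent series in Walsh functions $\wal_{\beta}(y)$ (when $t=0$) respectively $\overline{\wal_{\beta}(y)}$ (when $t>0$). Since $g\mapsto\langle g,\wal_{\alpha}\rangle$ is continuous on $L_2([0,1))$, I would evaluate the pairing term by term; by orthonormality of the $b$-adic Walsh system each term produces a Kronecker delta, so at most one summand survives. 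This at once forces $\langle\hat{\chi}_{[0,\cdot)}(t),\wal_{\alpha}\rangle=0$ for every $\alpha$ outside the index set occurring in Lemma~\ref{chi_roof_lem}, and for $\alpha$ inside it identifies the scalar product with one explicit coefficient.

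For $t=0$ the first part of Lemma~\ref{chi_roof_lem} gives $\hat{\chi}_{[0,y)}(0)=y=\frac12+\sum_{a\geq1}\sum_{z=1}^{b-1}\frac{1}{b^{a}(e^{-2\pi i z/b}-1)}\wal_{zb^{a-1}}(y)$, whence $\langle\hat{\chi}_{[0,\cdot)}(0),\wal_{\alpha}\rangle$ equals $\frac12$ for $\alpha=0$, equals $\frac{1}{b^{a}(e^{-2\pi i z/b}-1)}$ for $\alpha=zb^{a-1}$ with $1\leq z\leq b-1$ and $a\geq1$, and is zero otherwise; these $\alpha$ are exactly the numbers $t+\tau b^{\varrho(t)+a-1}$ with $t=0$ (the value $\tau=0$ returning $\alpha=0$), so the index statement holds and only the size estimate remains.

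For $t>0$ I would substitute the second part of Lemma~\ref{chi_roof_lem}, which presents $\hat{\chi}_{[0,\cdot)}(t)$ as $b^{-\varrho(t)}$ times a linear combination of $\overline{\wal_{t'}}$, $\overline{\wal_{t}}$ and the functions $\overline{\wal_{zb^{\varrho(t)+a-1}+t}}$ with $1\leq z\leq b-1$, $a\geq1$. Pairing with $\wal_{\alpha}$ then yields zero unless $\alpha$ lies in $\{t'\}\cup\{t\}\cup\{t+zb^{\varrho(t)+a-1}\}$, which is precisely the family $\alpha=t'$ or $\alpha=t+\tau b^{\varrho(t)+a-1}$ with $0\leq\tau\leq b-1$ from the statement ($\tau=0$ giving $\alpha=t$); these three sub-families lie in the disjoint ranges $[0,b^{\varrho(t)-1})$, $[b^{\varrho(t)-1},b^{\varrho(t)})$ and $[b^{\varrho(t)},\infty)$, so at most one term can ever contribute. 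In the surviving cases the scalar product equals $b^{-\varrho(t)}$ times, respectively, $\frac{1}{1-e^{-2\pi i\tau_{\varrho(t)-1}/b}}$, $\frac{1}{e^{-2\pi i\tau_{\varrho(t)-1}/b}-1}+\frac12$, or $\frac{1}{b^{a}(e^{2\pi i z/b}-1)}$.

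It remains to turn these explicit values into the bound $c\,b^{-\max(\varrho(t),\varrho(\alpha))}$, which is the only computational step. Because $\tau_{\varrho(t)-1}\neq0$ by definition of $\varrho$ and $z\in\{1,\dots,b-1\}$, the moduli $|1-e^{\pm2\pi i\tau/b}|$ and $|e^{2\pi i z/b}-1|$ are all bounded below by $2\sin(\pi/b)>0$, so every scalar prefactor above is at most a constant $c=c(b)$. For the powers of $b$ I would use $\varrho(t')<\varrho(t)$ (erasing the top digit of $t$), $\varrho(t)=\varrho(t)$, and $\varrho(t+zb^{\varrho(t)+a-1})=\varrho(t)+a$ (the nonzero digit $z$ sits at position $\varrho(t)+a-1$, strictly above every digit of $t$, so no carry occurs), so that in each case $b^{-\varrho(t)}$ times the relevant further power of $b^{-1}$ equals $b^{-\max(\varrho(t),\varrho(\alpha))}$; for $t=0$ the same arithmetic gives $b^{0}=1$ when $\alpha=0$ and $b^{-a}=b^{-\varrho(\alpha)}$ when $\alpha=zb^{a-1}$. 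Multiplying the constant $c$ by these powers gives the claimed estimate in every case, and the vanishing assertion has already been obtained. I do not expect a genuine obstacle: the lemma is essentially a repackaging of Lemma~\ref{chi_roof_lem}, and the only point needing attention is the routine bookkeeping of digit positions when matching $\alpha$ against the (conjugated) Walsh indices in the Fine--Price formula, which at most changes the representative index and never the size of the coefficient.
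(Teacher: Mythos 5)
Your proposal is correct and follows essentially the same route as the paper: substitute the Fine--Price expansion from Lemma~\ref{chi_roof_lem}, pair with $\wal_{\alpha}$ term by term using orthogonality so that at most one coefficient survives, bound the scalar prefactors via $|e^{2\pi i z/b}-1|\geq 2\sin(\pi/b)$, and match the powers of $b$ through $\varrho(t')<\varrho(t)$ and $\varrho(t+z b^{\varrho(t)+a-1})=\varrho(t)+a$. The extra remarks you add ($L_2$-convergence justifying termwise pairing, disjointness of the three index ranges) are harmless refinements of the same argument.
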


\begin{proof}
We use Lemma \ref{chi_roof_lem}. First let $t > 0$. Suppose that $\alpha = t'$, so $\varrho(\alpha) < \varrho(t)$. Then
\[ \left| \left\langle \hat{\chi}_{[0,\cdot)}(t) , \wal_{\alpha} \right\rangle \right| = \left| \frac{1}{1 - e^{\frac{-2 \pi i}{b} \tau_{\varrho(t) - 1}}} \right| \, b^{-\varrho(t)} \leq c \, b^{-\varrho(t)}. \]
If $\alpha = t$ meaning that $\varrho(\alpha) = \varrho(t)$ then
\[ \left| \left\langle \hat{\chi}_{[0,\cdot)}(t) , \wal_{\alpha} \right\rangle \right| \leq \left| \frac{1}{e^{\frac{-2 \pi i}{b} \tau_{\varrho(t) - 1}} - 1} + \frac{1}{2} \right| \, b^{-\varrho(t)} \leq c \, b^{-\varrho(t)}. \]
Now let $\alpha = t + \tau \, b^{\varrho(t) +a - 1}$ for some $1 \leq \tau \leq b - 1$ and $a \geq 1$. Hence $\varrho(\alpha) = \varrho(t) + a$. Then
\[ \left| \left\langle \hat{\chi}_{[0,\cdot)}(t) , \wal_{\alpha} \right\rangle \right| = \left| \frac{1}{e^{\frac{2 \pi i}{b} \tau} - 1} \right| \, b^{-\varrho(t)} \, b^{-a} \leq c \, b^{-\varrho(\alpha)}. \]
For any other $\alpha$ clearly,
\[ \left\langle \hat{\chi}_{[0,\cdot)}(t) , \wal_{\alpha} \right\rangle = 0. \]
Now we consider the case $t = 0$. Then for $\alpha = 0$ (meaning $\varrho(\alpha) = 0$) we have
\[ \left| \left\langle \hat{\chi}_{[0,\cdot)}(t) , \wal_{\alpha} \right\rangle \right| = \frac{1}{2} \leq c \, b^{-\varrho(\alpha)}. \]
Let $\alpha = \tau \, b^{a - 1}$ for some $1 \leq \tau \leq b - 1$ and $a \geq 1$. Then $\varrho(\alpha) = a$ and
\[ \left| \left\langle \hat{\chi}_{[0,\cdot)}(t) , \wal_{\alpha} \right\rangle \right| = \left| \frac{1}{e^{\frac{2 \pi i}{b} \tau} - 1} \right| \, b^{-a} \leq c \, b^{-\varrho(\alpha)}. \]
For any other $\alpha$ again clearly,
\[ \left\langle \hat{\chi}_{[0,\cdot)}(t) , \wal_{\alpha} \right\rangle = 0. \]

\end{proof}

We now need an additional notation. For any function $f \, : \, \Fb_b^{dn} \longrightarrow \Cx$ we call $\hat{f}$ given by
\[ \hat{f}(B) = \sum_{A \in \Fb_b^{dn}} \e^{\frac{2 \pi \im}{b} A \cdot B} f(A) \]
for $B \in \Fb_b^{dn}$ the Walsh transform of $f$.

The following two facts can be found in \cite{DP10}. The first lemma is \cite[Lemma 16.9]{DP10} while the second is \cite[(16.3)]{DP10}.

\begin{lem} \label{lem_169dp10}

Let $\C$ and $\C^{\perp}$ be mutually dual $\Fb_b$-linear subspaces of $\Fb_b^{dn}$. Then for any function $f \, : \, \Fb_b^{dn} \longrightarrow \Cx$ we have
\[ \sum_{A \in \C} f(A) = \frac{\# \C}{b^{dn}} \sum_{B \in \C^{\perp}} \hat{f}(B). \]

\end{lem}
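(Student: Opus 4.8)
This is the Poisson summation (MacWilliams-type) identity for finite $\Fb_b$-vector spaces, and the plan is the standard one: expand the right-hand side via the definition of the Walsh transform, interchange the two finite sums, and evaluate an inner character sum over $\C^{\perp}$ by orthogonality. Concretely, I would start from
\[ \frac{\# \C}{b^{dn}} \sum_{B \in \C^{\perp}} \hat{f}(B) = \frac{\# \C}{b^{dn}} \sum_{B \in \C^{\perp}} \sum_{A \in \Fb_b^{dn}} \e^{\frac{2 \pi \im}{b} A \cdot B} f(A) = \frac{\# \C}{b^{dn}} \sum_{A \in \Fb_b^{dn}} f(A) \, S(A), \]
where $S(A) = \sum_{B \in \C^{\perp}} \e^{\frac{2 \pi \im}{b} A \cdot B}$ and the reordering is harmless since all index sets are finite.

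The heart of the matter is the evaluation of $S(A)$. For fixed $A$, the map $B \mapsto \e^{\frac{2 \pi \im}{b} A \cdot B}$ is a homomorphism from the additive group $\C^{\perp}$ to the unit circle. If $A \cdot B = 0$ for all $B \in \C^{\perp}$, i.e. if $A \in (\C^{\perp})^{\perp} = \C$, then every summand equals $1$ and $S(A) = \# \C^{\perp}$. Otherwise pick $B_0 \in \C^{\perp}$ with $A \cdot B_0 \neq 0$; since $b$ is prime, $\Fb_b$ is a field and $\e^{2 \pi \im / b}$ is a primitive $b$-th root of unity, so $\zeta := \e^{\frac{2 \pi \im}{b} A \cdot B_0} \neq 1$. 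As $B \mapsto B + B_0$ permutes $\C^{\perp}$, one gets $S(A) = \zeta \, S(A)$, hence $S(A) = 0$. Thus only the terms with $A \in \C$ survive and each carries the factor $\# \C^{\perp}$:
\[ \frac{\# \C}{b^{dn}} \sum_{B \in \C^{\perp}} \hat{f}(B) = \frac{\# \C \cdot \# \C^{\perp}}{b^{dn}} \sum_{A \in \C} f(A). \]

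To finish, I would invoke $\# \C = b^{\dim \C}$ and $\# \C^{\perp} = b^{\dim \C^{\perp}} = b^{dn - \dim \C}$, using the dimension formula $\dim(\C^{\perp}) = dn - \dim(\C)$ recalled in the excerpt; then $\# \C \cdot \# \C^{\perp} = b^{dn}$, the prefactor cancels, and the identity follows. The only non-bookkeeping ingredient is the character-sum orthogonality, and within it the identity $(\C^{\perp})^{\perp} = \C$ (already stated in the excerpt), which is precisely what makes the character $B \mapsto \e^{\frac{2 \pi \im}{b} A \cdot B}$ of $\C^{\perp}$ trivial exactly on $\C$; primality of $b$ is used only to ensure $\Fb_b$ is a field and $\e^{2 \pi \im / b}$ a primitive root. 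I do not anticipate any genuine obstacle — this is the classical Poisson summation formula for finite abelian groups in disguise.
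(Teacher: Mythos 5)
Your proof is correct. Note, however, that the paper does not prove this lemma at all: it is quoted verbatim from \cite[Lemma 16.9]{DP10}, so there is no internal argument to compare with, and your write-up in effect supplies the missing proof. Your route is the standard one behind that reference: expand $\hat{f}$, interchange the two finite sums, and evaluate the character sum $S(A)=\sum_{B\in\C^{\perp}}\e^{\frac{2\pi\im}{b}A\cdot B}$ by the shift trick, using $(\C^{\perp})^{\perp}=\C$ to identify exactly when the character is trivial, and $\#\C\cdot\#\C^{\perp}=b^{\dim\C}\cdot b^{dn-\dim\C}=b^{dn}$ to cancel the prefactor. It is worth observing that the character-sum evaluation you rederive is precisely the paper's Lemma \ref{lem_walshduality} (also quoted from \cite{DP10}) with the roles of $\C$ and $\C^{\perp}$ interchanged; invoking that lemma directly would let you skip the shift argument, but proving it as you do keeps the argument self-contained. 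Your remark on the role of primality is accurate for the paper's setting ($b$ prime), and in fact the orthogonality step only needs that $\e^{\frac{2\pi\im}{b}A\cdot B_0}\neq 1$ whenever $A\cdot B_0\neq 0$ in $\Z/b\Z$, so no genuine obstacle arises anywhere.
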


\begin{lem} \label{lem_walshduality}

Let $\C$ and $\C^{\perp}$ be mutually dual $\Fb_b$-linear subspaces of $\Fb_b^{dn}$. Let $B \in \Fb_b^{dn}$. Then we have
\[ \sum_{A \in \C} \e^{\frac{2 \pi \im}{b} A \cdot B} = \begin{cases} \# \C, & B \in \C^{\perp}, \\ 0, & B \notin \C^{\perp}. \end{cases} \]

\end{lem}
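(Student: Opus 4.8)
The plan is to prove this by the standard orthogonality argument for additive characters of $\F_b$, viewing $A \mapsto \e^{\frac{2\pi\im}{b}A\cdot B}$ as a character of the finite abelian group $(\C,+)$.

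First I would dispose of the trivial case $B \in \C^{\perp}$. By the very definition of the dual space $\C^{\perp}$ relative to the standard inner product in $\F_b^{dn}$, we have $A \cdot B = 0$ in $\F_b$ for every $A \in \C$, so each summand equals $\e^{0} = 1$ and the sum is exactly $\#\C$.

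For the remaining case $B \notin \C^{\perp}$, by definition there is some $A_0 \in \C$ with $A_0 \cdot B \neq 0$ in $\F_b$, and hence $\e^{\frac{2\pi\im}{b}A_0\cdot B} \neq 1$. Since $\C$ is an $\F_b$-linear subspace, the translation $A \mapsto A + A_0$ is a bijection of $\C$ onto itself. Writing $S$ for the sum in question and using linearity of the inner product in the first argument over $\F_b$, I would compute
\[ S = \sum_{A \in \C} \e^{\frac{2\pi\im}{b}(A+A_0)\cdot B} = \e^{\frac{2\pi\im}{b}A_0\cdot B} \sum_{A \in \C} \e^{\frac{2\pi\im}{b}A\cdot B} = \e^{\frac{2\pi\im}{b}A_0\cdot B}\,S, \]
so that $\bigl(1 - \e^{\frac{2\pi\im}{b}A_0\cdot B}\bigr)S = 0$; since the first factor is nonzero, $S = 0$.

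There is essentially no obstacle here. The only points needing a word of care are that $A \cdot B$ must be read as an element of $\F_b \cong \Z/b\Z$ so that $\e^{\frac{2\pi\im}{b}(\cdot)}$ is well defined on it, and that $\#\C = b^{\dim\C}$ in the first case; both are immediate from the setup. (Alternatively, this is just the special case $f \equiv 1$ of Lemma~\ref{lem_169dp10} together with $\hat f(B) = \#\C^{\perp}\cdot\1_{\{B=0\}}\cdot\tfrac{b^{dn}}{\#\C^{\perp}}$-type bookkeeping, but the direct argument above is cleaner.)
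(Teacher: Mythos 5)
Your argument is correct and complete: it is the standard orthogonality-of-characters proof (translate the sum by an $A_0\in\C$ with $A_0\cdot B\neq 0$ and use that $\e^{\frac{2\pi\im}{b}A_0\cdot B}\neq 1$), which is exactly the argument behind the formula the paper does not prove itself but simply cites as (16.3) in \cite{DP10}. Only your parenthetical alternative via Lemma~\ref{lem_169dp10} should be discarded, since in \cite{DP10} that lemma is itself derived from this orthogonality relation and the reduction would be circular -- but your main argument does not rely on it.
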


We will introduce some notation now, slightly changed from what can be found in \cite[16.2]{DP10}. Let $0 \leq \gamma_1, \ldots, \gamma_d \leq n$ be integers. We put $\gamma = (\gamma_1, \ldots, \gamma_d)$. Then we write
\[ \V_{\gamma} = \left\{ A \in \Fb_b^{dn} \, : \, \Phi_n(A) \in \prod_{i = 1}^d \left[ \left. 0, b^{-\gamma_i} \right) \right. \right\}. \]
Hence, $\V_{\gamma}$ consists of all such $A \in \Fb_b^{dn}$ that $a_i = (0, \ldots, 0, a_{i,\gamma_i + 1}, \ldots, a_{i n})$ for all $1 \leq i \leq d$. For all $1 \leq i \leq d$ let $0 \leq \lambda_i \leq \gamma_i$ be integers and let $\lambda = (\lambda_1, \ldots, \lambda_d)$. Then we write $\V_{\gamma, \lambda}$ for the set consisting of all such $A \in \Fb_b^{dn}$ that $a_i = (0, \ldots, 0, a_{i, \lambda_i + 1}, \ldots, a_{i, \gamma_i - 1}, 0, a_{i, \gamma_i + 1}, \ldots, a_{i n})$. The case $\lambda_i = \gamma_i$ is to be understood in the obvious way as $a_i = (0, \ldots, 0, a_{i,\gamma_i + 1}, \ldots, a_{i n})$. Therefore, $\V_{\gamma}^{\perp}$ consists of such $A \in \Fb_b^{dn}$ that $a_i = (a_{i 1}, \ldots, a_{i, \gamma_i}, 0, \ldots, 0)$ and $\V_{\gamma, \lambda}^{\perp}$ consists of such $A \in \Fb_b^{dn}$ that $a_i = (a_{i 1}, \ldots, a_{i, \lambda_i}, 0, \ldots, 0, a_{i, \gamma_i}, 0, \ldots, 0)$.

For a subset $V$ of $\Fb_b^{dn}$ we denote the characteristic function of $V$ by $\chi_V$. The next result is a slight generalization of the corresponding assertion from \cite[Lemma 16.11]{DP10}.

\begin{lem} \label{lem_1611dp10}

Let $\gamma_1, \ldots, \gamma_d, \lambda_1, \ldots, \lambda_d$ be as above. Let $\sigma$ be the number of such $i$ that $\lambda_i < \gamma_i$. For all $B \in \Fb_b^{dn}$ we have
\[ \hat{\chi}_{\V_{\gamma, \lambda}}(B) = b^{dn - |\lambda| - \sigma} \chi_{\V_{\gamma, \lambda}^{\perp}}(B). \]

\end{lem}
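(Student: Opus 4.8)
The plan is to recognize $\V_{\gamma,\lambda}$ as an $\Fb_b$-linear subspace of $\Fb_b^{dn}$ and then invoke Lemma~\ref{lem_walshduality}. By definition $A=(a_1,\dots,a_d)\in\V_{\gamma,\lambda}$ exactly when, for each $1\le i\le d$, the coordinates $a_{i1},\dots,a_{i\lambda_i}$ vanish and, when $\lambda_i<\gamma_i$, also $a_{i\gamma_i}=0$, while the remaining coordinates are unrestricted. These are linear conditions, so $\V_{\gamma,\lambda}$ is a linear subspace, its characteristic function is of the type covered by Lemma~\ref{lem_walshduality}, and hence
\[
\hat{\chi}_{\V_{\gamma,\lambda}}(B)=\sum_{A\in\V_{\gamma,\lambda}}\e^{\frac{2\pi\im}{b}A\cdot B}=\#\V_{\gamma,\lambda}\cdot\chi_{\V_{\gamma,\lambda}^{\perp}}(B)
\]
for every $B\in\Fb_b^{dn}$.

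It then remains only to determine $\#\V_{\gamma,\lambda}$, which I would do by counting the free coordinates block by block. For an index $i$ with $\lambda_i<\gamma_i$ the free positions are $\lambda_i+1,\dots,\gamma_i-1$ together with $\gamma_i+1,\dots,n$, that is $n-\lambda_i-1$ of them; for an index $i$ with $\lambda_i=\gamma_i$ the free positions are $\gamma_i+1,\dots,n$, that is $n-\lambda_i$ of them. Summing over $i$ and recalling that $\sigma$ counts the indices with $\lambda_i<\gamma_i$ gives $\dim_{\Fb_b}\V_{\gamma,\lambda}=dn-|\lambda|-\sigma$, so $\#\V_{\gamma,\lambda}=b^{dn-|\lambda|-\sigma}$. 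Substituting this into the displayed identity then gives the claim.

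If desired one can also confirm the description of $\V_{\gamma,\lambda}^{\perp}$ stated above: since $\V_{\gamma,\lambda}$ is spanned by the standard basis vectors sitting at its free positions, $B$ lies in $\V_{\gamma,\lambda}^{\perp}$ iff each $b_i$ is supported on the complementary set $\{1,\dots,\lambda_i\}\cup\{\gamma_i\}$ (or $\{1,\dots,\gamma_i\}$ when $\lambda_i=\gamma_i$), which matches the claimed form. There is no real obstacle here; the only place demanding care is the bookkeeping of which coordinates are constrained, and in particular the uniform handling of the cases $\lambda_i<\gamma_i$ and $\lambda_i=\gamma_i$ — which is precisely what the count $\sigma$ encodes.
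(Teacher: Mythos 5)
Your proof is correct and is essentially the argument the paper relies on: the paper omits a proof, citing the lemma as a slight generalization of \cite[Lemma 16.11]{DP10}, and that standard proof is exactly your route of recognizing $\V_{\gamma,\lambda}$ as a coordinate subspace of dimension $dn-|\lambda|-\sigma$, identifying $\V_{\gamma,\lambda}^{\perp}$, and applying the character-sum identity of Lemma~\ref{lem_walshduality}. The coordinate bookkeeping (the $-\sigma$ from the extra zero at position $\gamma_i$ when $\lambda_i<\gamma_i$) is handled correctly.
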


The following fact is a generalization of \cite[Lemma 16.13]{DP10}.

\begin{lem} \label{1613dp10}

Let $\C$ and $\C^{\perp}$ be mutually dual $\Fb_b$-linear subspaces of $\Fb_b^{dn}$. Let $\gamma_1, \ldots, \gamma_d, \lambda_1, \ldots, \lambda_d, \sigma$ be as above. Then we have
\[ \# \left( \C \cap \V_{\gamma, \lambda} \right) = \frac{\# \C}{b^{|\lambda| + \sigma}} \, \# \left( \C^{\perp} \cap \V_{\gamma, \lambda}^{\perp} \right). \]

\end{lem}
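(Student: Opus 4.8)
The plan is to obtain the identity as an immediate consequence of the Walsh-transform duality formula in Lemma \ref{lem_169dp10} applied to the characteristic function of $\V_{\gamma,\lambda}$, combined with the explicit computation of its Walsh transform in Lemma \ref{lem_1611dp10}. This mirrors the argument for the special case \cite[Lemma 16.13]{DP10}, the only new ingredient being that the more general sets $\V_{\gamma,\lambda}$ and their duals $\V_{\gamma,\lambda}^{\perp}$ are already set up so that Lemmas \ref{lem_169dp10} and \ref{lem_1611dp10} apply verbatim.

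First I would take $f = \chi_{\V_{\gamma,\lambda}} \colon \Fb_b^{dn} \to \Cx$ in Lemma \ref{lem_169dp10}. The left-hand side $\sum_{A \in \C} \chi_{\V_{\gamma,\lambda}}(A)$ is by definition the number of elements of $\C$ lying in $\V_{\gamma,\lambda}$, i.e.\ $\#(\C \cap \V_{\gamma,\lambda})$. For the right-hand side I would substitute the formula $\hat{\chi}_{\V_{\gamma,\lambda}}(B) = b^{dn-|\lambda|-\sigma}\chi_{\V_{\gamma,\lambda}^{\perp}}(B)$ from Lemma \ref{lem_1611dp10}, which gives $\frac{\#\C}{b^{dn}}\sum_{B\in\C^{\perp}} b^{dn-|\lambda|-\sigma}\chi_{\V_{\gamma,\lambda}^{\perp}}(B) = \frac{\#\C}{b^{dn}}\,b^{dn-|\lambda|-\sigma}\,\#(\C^{\perp}\cap\V_{\gamma,\lambda}^{\perp})$. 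The factors $b^{dn}$ cancel, and equating the two sides yields exactly $\#(\C\cap\V_{\gamma,\lambda}) = \frac{\#\C}{b^{|\lambda|+\sigma}}\#(\C^{\perp}\cap\V_{\gamma,\lambda}^{\perp})$, as claimed.

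There is essentially no obstacle beyond bookkeeping: the substantive work has been pushed into Lemma \ref{lem_1611dp10} (the Walsh transform of $\chi_{\V_{\gamma,\lambda}}$) and Lemma \ref{lem_169dp10} (the Poisson-type summation over dual subspaces). The one point that I would be careful to check is that the set $\V_{\gamma,\lambda}^{\perp}$ appearing in the statement of the present lemma is literally the orthogonal complement of $\V_{\gamma,\lambda}$ with respect to the standard inner product on $\Fb_b^{dn}$ — which is what the coordinate description preceding Lemma \ref{lem_1611dp10} asserts — so that the notation $\C^{\perp}\cap\V_{\gamma,\lambda}^{\perp}$ is consistent between the two lemmas. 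Granting that, the proof is a two-line computation.
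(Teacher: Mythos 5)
Your argument is correct and is exactly the intended one: the paper omits the proof precisely because, as in \cite[Lemma 16.13]{DP10}, the identity follows by applying Lemma \ref{lem_169dp10} to $f = \chi_{\V_{\gamma,\lambda}}$ and inserting the Walsh transform from Lemma \ref{lem_1611dp10}, which is what you do. Your check that $\V_{\gamma,\lambda}^{\perp}$ is the genuine orthogonal complement (complementary coordinate subspaces, so the notation is consistent across the two lemmas) is also the right point to verify.
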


\begin{prp} \label{main_est_prp}

Let $\C$ be an $\Fb_b$-linear subspace of $\Fb_b^{dn}$ of dimension $n$ with dual space of dimension $dn - n$ satisfying $\delta_n(\C^{\perp}) \geq n + 1$. Let $0 \leq \lambda_i \leq \gamma_i \leq n$ be integers for all $1 \leq i \leq d$ with $|\gamma| \geq n + 1$ and $|\lambda| + d \leq n$. Then we have
\[ \# \left\{ A = (a_1, \ldots, a_d) \in \C^{\perp}: \, v_n(a_i) \leq \gamma_i; \, a_{ik} = 0 \; \forall \, \lambda_i < k < \gamma_i; \, 1 \leq i \leq d \right\} \leq b^d. \]

\end{prp}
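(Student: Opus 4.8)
The plan is to recognize the set in question as the intersection $\C^\perp \cap \V_{\gamma,\lambda}^\perp$ and then to bound its cardinality by a dimension count. First, the condition $v_n(a_i) \le \gamma_i$ says that $a_{ik} = 0$ for all $k > \gamma_i$, and together with the condition $a_{ik} = 0$ for all $\lambda_i < k < \gamma_i$ this means precisely that each block $a_i$ is supported on the index set $\{1,\dots,\lambda_i\} \cup \{\gamma_i\}$; comparing with the description of $\V_{\gamma,\lambda}^\perp$ given before the statement, the set we must estimate is exactly $\C^\perp \cap \V_{\gamma,\lambda}^\perp$. Since this is an $\F_b$-linear subspace of $\F_b^{dn}$, it suffices to show $\dim\bigl(\C^\perp \cap \V_{\gamma,\lambda}^\perp\bigr) \le d$.

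To this end I would consider the linear ``evaluation'' map $\pi : \C^\perp \cap \V_{\gamma,\lambda}^\perp \to \F_b^d$ defined by $\pi(A) = (a_{1,\gamma_1}, \dots, a_{d,\gamma_d})$, i.e.\ the map reading off the $\gamma_i$-th coordinate in each block. The key point, and the only place where the hypotheses are used, is that $\pi$ is injective. If $A = (a_1,\dots,a_d)$ lies in $\ker \pi$, then each block $a_i$ is supported on $\{1,\dots,\lambda_i\}$ alone (its $\gamma_i$-th coordinate vanishes and the coordinates with $\lambda_i < k < \gamma_i$ vanish by membership in $\V_{\gamma,\lambda}^\perp$), hence $v_n(a_i) \le \lambda_i$ and therefore $v_n^d(A) = \sum_{i=1}^d v_n(a_i) \le |\lambda|$. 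Using $|\lambda| + d \le n$ and $d \ge 1$ we obtain $v_n^d(A) \le n - d \le n - 1 < n + 1 \le \delta_n(\C^\perp)$. Since $A \in \C^\perp$ and the minimum distance of $\C^\perp$ is at least $n+1$, this forces $A = 0$. Thus $\pi$ embeds $\C^\perp \cap \V_{\gamma,\lambda}^\perp$ into $\F_b^d$, so $\#\bigl(\C^\perp \cap \V_{\gamma,\lambda}^\perp\bigr) \le b^d$, which is the claim.

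I do not anticipate a real obstacle: the argument is essentially the standard fact that fixing ``few'' coordinates of a codeword of a code with large minimum distance determines it uniquely. The only care needed is in the degenerate cases $\lambda_i = \gamma_i$ or $\lambda_i = 0$, where one checks directly that kernel elements still satisfy $v_n(a_i) \le \lambda_i$. One may also note that the hypothesis $|\gamma| \ge n+1$ plays no role in this particular estimate; it merely records the range in which the proposition is later applied (for instance in combination with Lemma~\ref{1613dp10}).
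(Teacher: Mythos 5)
Your proof is correct, but it takes a genuinely different route from the paper. You identify the set with $\C^{\perp} \cap \V_{\gamma,\lambda}^{\perp}$ and then argue entirely on the dual side: projecting onto the $d$ coordinates $(a_{1,\gamma_1},\ldots,a_{d,\gamma_d})$ is injective because any element of the kernel is a codeword of $\C^{\perp}$ with $v_n^d \leq |\lambda| \leq n < n+1 \leq \delta_n(\C^{\perp})$, hence zero. The paper instead only uses the containment in $\C^{\perp} \cap \V_{\gamma,\lambda}^{\perp}$, converts the count to the primal side via the duality identity of Lemma \ref{1613dp10}, $\#(\C^{\perp} \cap \V_{\gamma,\lambda}^{\perp}) = b^{|\lambda|+\sigma-n}\,\#(\C \cap \V_{\gamma,\lambda})$, and then bounds $\#(\C \cap \V_{\gamma,\lambda}) \leq b^{n-|\lambda|}$ by observing that $\Phi_n^d(\V_{\gamma,\lambda})$ sits inside a $b$-adic box of volume $b^{-|\lambda|}$ and invoking the net property of $\Phi_n^d(\C)$ (which itself is where $\delta_n(\C^{\perp}) \geq n+1$ enters, through \cite[Theorem 7.14]{DP10}); this yields $b^{\sigma} \leq b^d$. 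So both arguments ultimately rest on the same minimum-distance hypothesis, but yours uses it directly as a coding-theoretic ``puncturing'' argument and is more elementary and self-contained (no Lemma \ref{lem_169dp10}/\ref{1613dp10}, no appeal to the net property), and it makes transparent that $|\gamma| \geq n+1$ is not needed and that $|\lambda| \leq n$ already suffices — which, as you correctly guess, is also true of the paper's proof; the paper's version buys consistency with the machinery imported from \cite{DP10} and the marginally sharper bound $b^{\sigma}$ with $\sigma = \#\{i : \lambda_i < \gamma_i\}$ (which you could also recover by projecting only onto those coordinates). One cosmetic point: if some $\gamma_i = 0$ the coordinate $a_{i,\gamma_i}$ does not exist, but then $\lambda_i = 0$ and the $i$-th block is forced to vanish, so you simply omit that coordinate from your map $\pi$; this does not affect the argument.
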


\begin{proof}
Let $A \in \C^{\perp}$ with $v_n(a_i) \leq \gamma_i$ and for all $\lambda_i < k < \gamma_i$ with $a_{i k} = 0$ for all $1 \leq i \leq d$. Let $\gamma = (\gamma_1, \ldots, \gamma_d)$ and $\lambda = (\lambda_1, \ldots, \lambda_d)$. Then we have $A \in \V_{\gamma, \lambda}^{\perp}$. Let $\sigma$ be the number of such $i$ that $\lambda_i < \gamma_i$. Analogously to the proof of \cite[Lemma 16.26]{DP10} using Lemma \ref{1613dp10} we get
\begin{align}
\# & \left\{ A = (a_1, \ldots, a_d) \in \C^{\perp}: \, v_n(a_i) \leq \gamma_i; \, a_{ik} = 0 \; \forall \, \lambda_i < k < \gamma_i; \, 1 \leq i \leq d \right\} \notag \\
& \qquad \leq \# \left( \C^{\perp} \cap \V_{\gamma, \lambda}^{\perp} \right) \notag \\
& \qquad = b^{|\lambda| + \sigma - n} \, \# \left( \C \cap \V_{\gamma, \lambda} \right). \label{multline_cardinality}
\end{align}
Now suppose $A \in \V_{\gamma, \lambda}$. Then for all $1 \leq i \leq d$ we have
\[ \Phi_n(a_i) = \frac{a_{i, \lambda_i + 1}}{b^{\lambda_i + 1}} + \ldots + \frac{a_{i, \gamma_i - 1}}{b^{\gamma_i - 1}} + \frac{a_{i, \gamma_i + 1}}{b^{\gamma_i + 1}} + \ldots + \frac{a_{i n}}{b^n} < \frac{1}{b^{\lambda_i}} \]
in the case where $\lambda_i < \gamma_i$ and
\[ \Phi_n(a_i) = \frac{a_{i, \lambda_i + 1}}{b^{\lambda_i + 1}} + \ldots + \frac{a_{i n}}{b^n} < \frac{1}{b^{\lambda_i}} \]
elsewise. Hence, $\Phi_n^d(A)$ is contained in the $b$-adic interval
\[ \prod_{i = 1}^d \left[ \left. 0, b^{-\lambda_i} \right) \right. \]
of volume $b^{-|\lambda|}$. By Proposition \cite[Theorem 7.14]{DP10}, $\Phi_n^d(\C)$ is a digital net in base $b$, and therefore, contains exactly $b^{n - |\lambda|}$ points which lie in a $b$-adic interval of volume $b^{-|\lambda|}$. Therefore, we have
\[ \# \left( \C \cap \V_{\gamma, \lambda} \right) \leq b^{n - |\lambda|} \]
and the result follows from \eqref{multline_cardinality} since $\sigma \leq d$.

\end{proof}

\begin{prp} \label{prp_haar_coeff_cs}

There exists a constant $c > 0$ with the following property. Let $\CS_n$ be a Chen-Skriganov type point set with $N = b^n$ points and let $\mu_{jml}$ be the $b$-adic Haar coefficient of the discrepancy function of $\CS_n$ for $j \in \N_{-1}^d, \, m \in \Dd_j$ and $l \in \B_j$. Then
\begin{enumerate}[(i)]
 	\item if $j = (-1, \ldots, -1)$ then
\[ \left| \mu_{jml} \right| \leq c \, b^{-n}, \] \label{prp_1_cs}
  \item if $j \neq (-1, \ldots, -1)$ and $|j| \leq n$ then
\[ \left| \mu_{jml} \right| \leq c \, b^{-|j| - n}, \] \label{prp_2_cs}
  \item if $j \neq (-1, \ldots, -1)$ and $|j| > n$ and $j_{\eta_1}, \ldots, j_{\eta_s} < n$ then
\[ \left| \mu_{jml} \right| \leq c \, b^{-|j| - n} \] \label{prp_3_cs}
and
\[ \left| \mu_{jml} \right| \leq c \, b^{-2|j|} \]
for all but $b^n$ coefficients $\mu_{jml}$,
  \item if $j \neq (-1, \ldots, -1)$ and $j_{\eta_1} \geq n$ or $\ldots$ or $j_{\eta_s} \geq n$ then
\[ \left| \mu_{jml} \right| \leq c \, b^{-2|j|}, \] \label{prp_4_cs}
\end{enumerate}

\end{prp}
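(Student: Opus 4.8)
The plan is to split the discrepancy function as in \eqref{split}, $D_{\CS_n}(y) = \Theta_{\CS_n}(y) + R_{\CS_n}(y)$, and to treat the two summands separately. Since $\CS_n$ is a digital net whose dual satisfies $\delta_n(\C_n^{\perp}) \geq n+1$, Lemma~\ref{rest_lem} gives $|R_{\CS_n}(y)| \leq c\,b^{-n}$ pointwise, hence $|\langle R_{\CS_n}, h_{jml}\rangle| \leq c\,b^{-n} \|h_{jml}|L_1\| \leq c\,b^{-j_{\eta_1}-\ldots-j_{\eta_s}} b^{-n}$, which is already dominated by the right-hand sides in all four cases. So the work is in estimating $\langle \Theta_{\CS_n}, h_{jml}\rangle$. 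By Lemma~\ref{theta_lem}, $\Theta_{\CS_n}(y) = \sum_{t \in \Dn'(C_1,\ldots,C_d)} \hat{\chi}_{[0,y)}(t)$, where $\hat{\chi}_{[0,y)}(t) = \prod_{i=1}^d \hat{\chi}_{[0,y_i)}(t_i)$. Expanding each one-dimensional factor by the Fine–Price formulas (Lemma~\ref{chi_roof_lem}) into its Walsh series and pairing with $h_{jml} = h_{j_1 m_1 l_1}(y_1)\cdots h_{j_d m_d l_d}(y_d)$, Lemma~\ref{lem_scalprod_1} tells us that the inner product $\langle h_{j_i m_i l_i}, \wal_{\alpha}\rangle$ vanishes unless $\varrho(\alpha) = j_i+1$ and $\alpha_{j_i} = l_i$ (or the degenerate $j_i=-1$ case), in which case it has modulus $b^{-j_i}$ (resp.\ $1$). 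Combined with Lemma~\ref{lem_scalprod_2}, which controls $|\langle \hat{\chi}_{[0,\cdot)}(t_i), \wal_{\alpha}\rangle| \leq c\,b^{-\max(\varrho(t_i),\varrho(\alpha))}$ and records exactly which $\alpha$ give a nonzero pairing ($\alpha = t_i'$, or $\alpha = t_i + \tau b^{\varrho(t_i)+a-1}$), this reduces $\mu_{jml}(\Theta_{\CS_n})$ to a sum over those $t \in \Dn'$ that are \emph{compatible} with $(j,l)$ in the above sense, each contributing at most a product of the form $c^d\, b^{-\sum_i \max(\varrho(t_i), j_i+1)} \cdot b^{-\sum_i j_i}$.

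Case \eqref{prp_1_cs} is Proposition~\ref{prp_min1} (there is nothing more to do). For the remaining cases I would organize the compatibility constraint: if $h_{j_i m_i l_i}$ pairs nontrivially with a Walsh function arising from $\hat{\chi}_{[0,\cdot)}(t_i)$, then that Walsh function $\wal_\alpha$ has $\varrho(\alpha) = j_i+1$, and by Lemma~\ref{lem_scalprod_2} either $\varrho(t_i) \leq j_i+1$ (when $\alpha = t_i$ or $\alpha = t_i'$) or $\varrho(t_i) < j_i+1$ with $\alpha$ obtained from $t_i$ by adding a high digit; in all subcases $\varrho(t_i) \leq j_i+1$, so each such $t$ lies in the box $\{v_n(\bar t_i) \leq j_i+1\}$ (more precisely $\varrho(t_i) \leq \min(j_i+1,n)$ since $t_i < b^n$), and moreover the $(j_i)$-th digit of $t_i$ (or of $t_i'$) is pinned. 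The contribution of each compatible $t$ is then $\leq c^d\, b^{-\sum_i \max(\varrho(t_i),j_i+1)}\,b^{-\sum_i j_i} \leq c^d\, b^{-\sum_i (j_i+1)}\, b^{-\sum_i j_i} = c^d\, b^{-2|j| - d}$ up to replacing $j_i=-1$ entries by $0$. It remains to count the compatible $t \in \Dn'$. Here is where the Chen–Skriganov structure enters: via the identification $t \leftrightarrow A = (\bar t_1,\ldots,\bar t_d) \in \C^{\perp}$ (using Lemma~\ref{lem_duality_into_disc}, so $\Dn' \cong \C_n^{\perp}\setminus\{0\}$ after transposition), the compatible set is contained in $\{A \in \C^{\perp} : v_n(a_i) \leq \gamma_i,\ a_{ik}=0\ \forall \lambda_i < k < \gamma_i\}$ for a suitable choice of $\gamma_i \approx \min(j_i+1,n)$ and $\lambda_i \approx \gamma_i-1$ (the single pinned digit), so that $|\gamma| = \sum_i \min(j_i+1,n)$ and $|\lambda| = |\gamma|-d$ or so, and Proposition~\ref{main_est_prp} applies provided $|\gamma| \geq n+1$ and $|\lambda|+d \leq n$, giving at most $b^d$ such $t$. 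This immediately yields $|\mu_{jml}(\Theta_{\CS_n})| \leq b^d \cdot c^d\, b^{-2|j|-d}\le c'\,b^{-2|j|}$ in cases \eqref{prp_3_cs} and \eqref{prp_4_cs} whenever the hypotheses of Proposition~\ref{main_est_prp} are met — which is exactly the regime $|j| > n$ (so that $|\gamma|\ge n+1$ can be arranged) and each $j_{\eta_\nu} < n$ (so that $\gamma_i = j_i+1 \le n$ for the relevant coordinates, keeping $|\lambda|+d \le n$ controllable; in case \eqref{prp_4_cs} where some $j_{\eta_\nu} \ge n$, one caps $\gamma_i = n$, uses $\varrho(t_i) \le n$ automatically, and the weaker $b^{-2|j|}$ bound still follows since the exponent saved per coordinate is $\min(j_i+1,n)+j_i$, and summing the non-capped coordinates plus using $n \geq$ the capped one's contribution relative to $2j_i$ requires a short separate estimate).

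For case \eqref{prp_2_cs} ($|j| \le n$) and for the first, stronger-looking bound $b^{-|j|-n}$ in \eqref{prp_3_cs}, the box $\{v_n(a_i)\le \gamma_i\}$ with $|\gamma|\le n$ contains up to $b^{n-|\lambda|}$ elements of $\C^{\perp}$ rather than $b^d$, but we recover the gain from the net property directly: the number of compatible $t$, having $\varrho(t_i)\le j_i+1$ and one digit pinned in each coordinate, is at most $b^{|j|+d} \cdot b^{-(\text{pinned digits})} = b^{\text{something}}$, and then the elementary counting in Lemma~\ref{lem_lambda_s_minus_1} together with the factor $b^{-2|j|-d}$ per term yields $|\mu_{jml}| \le c\, b^{-|j|-n}$ after summing over the free digit positions; alternatively, and more cleanly, one notes $\varrho(t_i) \le j_i+1$ forces $t$ into a set of size $\le b^{|j|+d}$, but actually it is a \emph{subset of $\Dn'$}, and since $\Dn' \leftrightarrow \C^{\perp}\setminus\{0\}$ has only $b^{(d-1)n}$ elements total while the relevant Walsh-support set shrinks, the sharper book-keeping gives $|j|+n$ in the exponent. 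I expect the main obstacle to be precisely this digit-level book-keeping: setting up $\gamma$ and $\lambda$ so that Proposition~\ref{main_est_prp} (or, when it does not apply, the bare net property $\#(\C\cap\V_{\gamma,\lambda}) \le b^{n-|\lambda|}$ from the net axiom) bites, and carefully tracking that each coordinate contributes a factor $b^{-j_i}$ from $h_{j_i m_i l_i}$ and a factor $b^{-\max(\varrho(t_i),j_i+1)}$ from $\hat{\chi}$, while the exceptional $b^n$ coefficients in \eqref{prp_3_cs} account for the small boxes ($|\gamma| \le n$) where Proposition~\ref{main_est_prp} is unavailable and only the weaker $b^{-|j|-n}$ survives. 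The counting lemma~\ref{lem_lambda_s_minus_1} will be used to absorb the sum over the possible values of $(\varrho(t_1),\ldots,\varrho(t_d))$ (equivalently over how $|j|$ or $n$ is distributed among coordinates) into a harmless polynomial factor, which is why the final bounds carry no logarithmic loss at this stage.
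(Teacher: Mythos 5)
Your setup (the splitting $D_{\CS_n}=\Theta_{\CS_n}+R_{\CS_n}$, Lemmas \ref{theta_lem}, \ref{rest_lem}, \ref{lem_scalprod_1}, \ref{lem_scalprod_2}, and the counting via Proposition \ref{main_est_prp}) is the right toolkit for part \eqref{prp_2_cs}, but the central compatibility claim is wrong and it breaks your counting. From Lemma \ref{lem_scalprod_2}, the case $\alpha_i=t_i'$ requires $\varrho(t_i')=j_i+1$, i.e.\ $\varrho(t_i)>j_i+1$; so compatible $t\in\Dn'$ are \emph{not} confined to $\{\varrho(t_i)\le j_i+1\ \forall i\}$ — a coordinate $t_i$ may have NRT weight up to $n$, with the constraint that its digits strictly between position $j_i+1$ and position $\varrho(t_i)$ vanish. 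Your choice $\gamma_i\approx\min(j_i+1,n)$, $\lambda_i\approx\gamma_i-1$ is therefore based on a false premise, and the subsequent argument for \eqref{prp_2_cs} (``$b^{\text{something}}$'', ``the sharper book-keeping gives $|j|+n$'') never identifies the actual mechanism that produces the factor $b^{-n}$: in the paper one splits the coordinates according to $\varrho(t_i)\le j_i$ or $\varrho(t_i)>j_i$, counts the latter ones through $\omega_\gamma$ with $\lambda_i=j_i$, $\gamma_i=\varrho(t_i)$, and uses that every nonzero $A\in\C_n^{\perp}$ has $v_n^d(A)\ge n+1$ (i.e.\ $|\gamma|\ge n+1$, from $\delta_n(\C_n^{\perp})\ge n+1$) to trade $b^{-\gamma_{r+1}-\ldots-\gamma_d}\le b^{\gamma_1+\ldots+\gamma_r-n-1}$; Proposition \ref{main_est_prp} then bounds $\tilde\omega_\gamma\le b^d$ and the geometric sums give $b^{-|j|-n}$. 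This chain is the heart of the proof of \eqref{prp_2_cs} and is missing from your sketch.

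For parts \eqref{prp_3_cs} and \eqref{prp_4_cs} your route cannot work at all: the remainder estimate only gives $|\langle R_{\CS_n},h_{jml}\rangle|\le c\,b^{-|j|-n}$, and for $|j|>n$ this is \emph{larger} than $b^{-2|j|}$, so your claim that the $R$-term is dominated by the right-hand side ``in all four cases'' is false, and no bound on $\Theta_{\CS_n}$ alone can yield the $b^{-2|j|}$ estimates. The paper does not use the decomposition here: if $I_{jm}$ contains no point of $\CS_n$ (which happens for all but at most $b^n$ boxes when $|j|>n$, and for every box when some $j_{\eta_\nu}\ge n$, since the net points have coordinates in $b^{-n}\Z$ and hence never lie in the interior of such thin boxes), the counting part of $D_{\CS_n}$ contributes nothing and Lemma \ref{lem_haar_coeff_besov_x} gives $|\mu_{jml}|\le c\,b^{-2|j|}$; boxes containing one point are handled by Lemma \ref{lem_haar_coeff_besov_indicator}, giving $c\,b^{-|j|-n}$. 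In particular the exceptional set of $b^n$ coefficients in \eqref{prp_3_cs} is the set of boxes containing a net point — a geometric fact — not, as you suggest, the set of parameters where Proposition \ref{main_est_prp} is unavailable. You never invoke Lemmas \ref{lem_haar_coeff_besov_x} and \ref{lem_haar_coeff_besov_indicator}, so these two parts are unproved in your proposal.
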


\begin{proof}
Part \eqref{prp_1_cs} is actually Proposition \ref{prp_min1}.

To prove part \eqref{prp_2_cs} we use again the resolution of $D_{\CS_n}$ in 
\[ D_{\CS_n} = \Theta_{\CS_n} + R_{\CS_n}. \]
Let $j \in \N_{-1}^d, \, j \neq (-1, \ldots, -1), \, |j| \leq n, \, m \in \Dd_j, \, l \in \B_j$. The Walsh function series of $h_{jml}$ can be given as
\begin{align} \label{h_j_m_l_given_as}
h_{jml} = \sum_{\alpha \in \N_0^d} \left\langle h_{jml} , \wal_{\alpha} \right\rangle \wal_{\alpha}.
\end{align}
By Lemma \ref{rest_lem} we have
\[ \left| \left\langle R_{\CS_n} , h_{jml} \right\rangle \right| \leq c \, b^{-n} |I_{jm}| = c \, b^{-|j| - n}. \]
We recall that
\[ \left\langle h_{jml}, \wal_{\alpha} \right\rangle = \left\langle h_{j_1 m_1 l_1}, \wal_{\alpha_1} \right\rangle \cdot \ldots \cdot \left\langle h_{j_d m_d l_d}, \wal_{\alpha_d} \right\rangle \]
and
\[ \left\langle \hat{\chi}_{[0,y)}(t) , \wal_{\alpha} \right\rangle = \left\langle \hat{\chi}_{[0,y_1)}(t_1) , \wal_{\alpha_1} \right\rangle \cdot \ldots \cdot \left\langle \hat{\chi}_{[0,y_d)}(t_d) , \wal_{\alpha_d} \right\rangle. \]
We will use Lemmas \ref{lem_scalprod_1} and \ref{lem_scalprod_2} on each of the factors. Lemma \ref{lem_scalprod_1} gives us $|\left\langle h_{j_i m_i l_i}, \wal_{\alpha_i} \right\rangle| \leq b^{-j_i}$ if $j_i \neq -1$ for all $i$. For all $\alpha$ with $\varrho(\alpha_i) \neq j_i + 1$ for some $i$ we have $\left\langle h_{j m l}, \wal_{\alpha} \right\rangle = 0$. We also always get $0$ if the leading digit in the $b$-adic expansion of $\alpha_i$ is not $l_i$ for some $i$. In the case where $j_i = -1$ we can get $b^{-j_i}$, by increasing the constant. From Lemma \ref{lem_scalprod_2} we have $|\left\langle \hat{\chi}_{[0,y_i)}(t_i) , \wal_{\alpha_i} \right\rangle| \leq c \, b^{-\max(\varrho(\alpha_i), \varrho(t_i))}$. Inserting Lemma \ref{theta_lem} and \eqref{h_j_m_l_given_as} we get
\begin{align*}
\left| \mu_{jml} (\Theta_{\CS_n}) \right| & = \left| \left\langle \Theta_{\CS_n} , h_{jml} \right\rangle \right| \\
& = \left| \left\langle \sum_{t \in \Dn'(C_1, \ldots, C_d)} \hat{\chi}_{[0,\cdot)}(t) , \sum_{\alpha \in \N_0^d} \left\langle h_{jml} , \wal_{\alpha} \right\rangle \wal_{\alpha} \right\rangle \right| \\
& \leq \sum_{t \in \Dn'(C_1, \ldots, C_d)} \sum_{\alpha \in \N_0^d} \left| \left\langle \hat{\chi}_{[0,\cdot)}(t) , \wal_{\alpha} \right\rangle \right| \left| \left\langle h_{jml}, \wal_{\alpha} \right\rangle \right| \\
& \leq c_1 \, b^{-j_1 - \ldots - j_d} \sum_{t \in \Dn'(C_1, \ldots, C_d)} b^{-\max(j_1, \varrho(t_1)) - \ldots - \max(j_d, \varrho(t_d))}.
\end{align*}
The summation in $\alpha$ disappears due to the following facts. The application of Lemma \ref{lem_scalprod_1} leaves only all such $\alpha$ with $\varrho(\alpha_i) = j_i + 1$ and with $l_i$ as leading digit in the $b$-adic expansion of $\alpha_i$ for all $i$. The application of Lemma \ref{lem_scalprod_2} leaves then at most one $\alpha$ per $t$, namely the one with either $\alpha_i = t_i'$ (if $\varrho(t_i) > j_i + 1$) or $\alpha_i = t_i + l_i \, b^{j_i}$ (if $\varrho(t_i) \leq j_i + 1$) for all $i$. In the cases where there is an $i$ with $\varrho(t_i) > j_i + 1$, it is possible that no $\alpha$ is left in the summation, since we still have the condition on $\alpha_i$ that the leading digit in the $b$-adic expansion is $l_i$, which cannot be guaranteed for $t_i'$.

Our next step is to break the sum above into sums where for every $t$ every coordinate either has bigger NRT weight than the corresponding coordinate of $j$ or a smaller NRT weight. Let $0 \leq r \leq d$ be the integer that is the cardinality of such $1 \leq i \leq d$ that the NRT weight is smaller. Without loss of generality we consider for every $r$ only the case where for $1 \leq i \leq r$ we have $\varrho(t_i) \leq j_i$ while for $r + 1 \leq i \leq d$ we have $\varrho(t_i) > j_i$. All the other cases follow from renaming the indices and we will just increase the constant. In the notation we split the sum
\[ \sum_{t \in \Dn'(C_1, \ldots, C_d)} \leq c_2 \, \sum_{r = 0}^d \, \sum_{t \in \Dn'_r(C_1, \ldots, C_d)} \]
where by $\Dn'_r(C_1, \ldots, C_d)$ we mean the subset of $\Dn'(C_1, \ldots, C_d)$ according to what we explained above (with ordered indices and other cases incorporated into the constant, $r$ coordinates have smaller NRT weight). So we have
\begin{align*}
\left| \mu_{jml} (\Theta_{\CS_n}) \right| & \leq c_3 \, b^{-j_1 - \ldots - j_d} \sum_{r = 0}^d \, \sum_{t \in \Dn'_r(C_1, \ldots, C_d)} b^{-j_1 - \ldots - j_r - \varrho(t_{r + 1}) - \ldots - \varrho(t_d)} \\
& = c_3 \, \sum_{r = 0}^d \, b^{-2j_1 - \ldots - 2j_r - j_{r + 1} - \ldots - j_d} \sum_{t \in \Dn'_r(C_1, \ldots, C_d)} b^{-\varrho(t_{r + 1}) - \ldots - \varrho(t_d)}.
\end{align*}
Instead of summing over $t$, we can sum over the values of $\varrho(t)$, considering the number of such $t$ that $\varrho(t_i) = \gamma_i$, $1 \leq i \leq d$.
We recall that $\CS_n = \Phi_n^d(\C_n)$. Then we denote
\[ \omega_{\gamma} = \# \left\{ A \in \C_n^{\perp}: \, v_n(a_i) = \gamma_i \, \forall \, i \, \wedge \, a_{ik} = 0 \; \forall \, j_i < k < \gamma_i; \, r + 1 \leq i \leq d \right\} \]
and 
\[ \tilde{\omega}_{\gamma} = \# \left\{ A \in \C_n^{\perp}: \, v_n(a_i) \leq \gamma_i \, \forall \, i \, \wedge \, a_{ik} = 0 \; \forall \, j_i < k < \gamma_i; \, r + 1 \leq i \leq d \right\}. \]
Let $\Gamma$ consist of all such $\gamma = (\gamma_1, \ldots, \gamma_d)$ that $0 \leq \gamma_i \leq j_i$ for $1 \leq i \leq r$, $j_i < \gamma_i \leq n$ for $r + 1 \leq i \leq d$ and $|\gamma| \geq n + 1$. Then we have
\[ \left| \mu_{jml} (\Theta_{\CS_n}) \right| \leq c_3 \, \sum_{r = 0}^d \, b^{-2j_1 - \ldots - 2j_r - j_{r+1} - \ldots - j_d} \sum_{\gamma \in \Gamma} b^{-\gamma_{r + 1} - \ldots - \gamma_d} \, \omega_{\gamma}. \]
We can apply Proposition \ref{main_est_prp} with $\lambda_i = \gamma_i, \, 1 \leq i \leq r$ and $\lambda_i = j_i, \, r + 1 \leq i \leq d$. Thereby, we get $\tilde{\omega}_{\gamma} \leq b^d$. An obvious observation is that
\[ \sum_{0 \leq \kappa_i \leq \gamma_i, \, 1 \leq i \leq d} \omega_{\kappa} \leq \tilde{\omega}_{\gamma} \]
with $\kappa = (\kappa_1, \ldots, \kappa_d)$. Recall the notation $\bar{n} = (n, \ldots, n)$. For all $\gamma \in \Gamma$ it holds that $-\gamma_{r + 1} - \ldots - \gamma_d \leq \gamma_1 + \ldots + \gamma_r - n - 1$ so,
\begin{align*}
& \left| \mu_{jml} (\Theta_{\CS_n}) \right| \leq c_3 \, \sum_{r = 0}^d \, b^{-2j_1 - \ldots - 2j_r - j_{r+1} - \ldots - j_d} \sum_{\gamma \in \Gamma} b^{-n - 1 + \gamma_1 + \ldots + \gamma_r} \, \omega_{\gamma} \\
& \leq c_4 \, \sum_{r = 0}^d \, b^{-2j_1 - \ldots - 2j_r - j_{r+1} - \ldots - j_d - n} \sum_{0 \leq \gamma_i \leq j_i, \, 1 \leq i \leq r} b^{\gamma_1 + \ldots + \gamma_r} \sum_{j_i < \gamma_i \leq n, \, r + 1 \leq i \leq d} \omega_{\gamma} \\
& \leq c_4 \, \sum_{r = 0}^d \, b^{-2j_1 - \ldots - 2j_r - j_{r+1} - \ldots - j_d - n} \, \prod_{i = 1}^r \sum_{\kappa_i = 0}^{j_i} b^{\kappa_i} \sum_{j_i < \gamma_i \leq n, \, r + 1 \leq i \leq d} \, \max_{0 \leq \gamma_i \leq j_i, \, 1 \leq i \leq r} \omega_{\gamma} \\
& \leq c_5 \, \sum_{r = 0}^d \, b^{-j_1 - \ldots - j_d - n} \sum_{0 \leq \gamma_i \leq n, \, 1 \leq i \leq d} \omega_{\gamma} \\
& \leq c_6 \, b^{-j_1 - \ldots - j_d - n} \, \tilde{\omega}_{\bar{n}} \\
& \leq c_6 \, b^{-j_1 - \ldots - j_d - n} \, b^d \\
& \leq c_7 \, b^{-|j| - n}.
\end{align*}

For the part \eqref{prp_3_cs} let $|j| > n$ and $j_{\eta_1}, \ldots, j_{\eta_s} < n$. We recall that $\CS_n$ contains exactly $N = b^n$ points and for fixed $j \in \N_{-1}^d$, the interiors of the $b$-adic intervals $I_{jm}$ are mutually disjoint. There are no more than $b^n$ such $b$-adic intervals which contain a point of $\CS_n$ meaning that all but $b^n$ intervals contain no points at all. This fact combined with Lemma \ref{lem_haar_coeff_besov_x} gives us the second statement of this part. The remaining boxes contain exactly one point of $\CS_n$. So from Lemmas \ref{lem_haar_coeff_besov_x} and \ref{lem_haar_coeff_besov_indicator} we get the first statement of this part.

Finally, let $j_{\eta_1} \geq n$ or $\ldots$ or $j_{\eta_s} \geq n$, then there is no point of $\CS_n$ which is contained in the interior of the $b$-adic interval $I_{jm}$. Thereby part \eqref{prp_4_cs} follows from Lemma \ref{lem_haar_coeff_besov_x}.

\end{proof}

\section{The proof of the main result}
\begin{proof}[Proof of Theorem \ref{thm_main}]
The point set satisfying the assertion is the Chen-Skriganov type point set $\CS_n$. Let $\mu_{jml}$ be the $b$-adic Haar coefficients of the discrepancy function of $\CS_n$. We write $|j| = j_{\eta_1} + \ldots + j_{\eta_s}$. We have an equivalent quasi-norm on $S_{pq}^r B([0,1)^d)$ in \eqref{eq_quasinorm} so that the proof of the inequality
\[ \left( \sum_{j \in \N_{-1}^d} b^{|j|(r - \frac{1}{p} + 1) q} \left( \sum_{m \in \Dd_j, \, l \in \B_j} |\mu_{jml}|^p \right)^{\frac{q}{p}} \right)^{\frac{1}{q}} \leq C \, b^{n(r-1)} n^{\frac{d-1}{q}} \]
for some constant $C > 0$ establishes the proof of the theorem in this case. 

To estimate the expression on the left-hand side, we use Minkowski's inequality to split the sum into summands according to the cases of Proposition \ref{prp_haar_coeff_cs}. We denote
\[ \Xi_j = b^{|j|(r - \frac{1}{p} + 1) q} \left( \sum_{m \in \Dd_j, \, l \in \B_j} |\mu_{jml}|^p \right)^{\frac{1}{p}} \]
and get
\[ \left( \sum_{j \in \N_{-1}^d} \Xi_j^q \right)^{\frac{1}{q}} \leq \Xi_{(-1, \ldots, -1)} + \sum_{s = 1}^d \left[ \left( \sum_{j \in J_s^1} \Xi_j^q \right)^{\frac{1}{q}} + \left( \sum_{j \in J_s^2} \Xi_j^q \right)^{\frac{1}{q}} + \sum_{i = 1}^s \left( \sum_{j \in J_{s i}^3} \Xi_j^q \right)^{\frac{1}{q}} \right] \]
where $J_s^1$ is the set of all such $j \neq (-1, \ldots, -1)$ for which $|j| \leq n$, $J_s^2$ is the set of all such $j \neq (-1, \ldots, -1)$ for which $0 \leq j_{\eta_1}, \ldots, j_{\eta_s} \leq n-1$ and $|j| > n$ and $J_{s i}^3$ is the set of all such $j$ for which $j_{\eta_i} \geq n$.

We will show that each of the summands above can be bounded by $C \, b^{n(r-1)} n^{\frac{d-1}{q}}$ which finishes the proof.

Part \eqref{prp_1_cs} of Proposition \ref{prp_haar_coeff_cs} gives us for $j = (-1, \ldots, -1), \, m = (0,\ldots,0), \, l = (0,\ldots,0)$
\[ \Xi_j = |\mu_{jml}| \leq c_1 b^{-n} \leq c_2 b^{n(r-1)} n^{\frac{d-1}{q}}. \]
Let now $1 \leq s \leq d$. We will use \eqref{prp_2_cs} in Proposition \ref{prp_haar_coeff_cs} and Lemma \ref{lem_lambda_s_minus_1}. The summation over $l \in \B_j$ can be incorporated into the constant and we recall that $\# \Dd_j = b^{|j|}$. Hence (using the fact that $r < 0$) we have
\begin{align*}
\left( \sum_{j \in J_s^1} \Xi_j^q \right)^{\frac{1}{q}} & \leq c_3 \left( \sum_{j \in J_s^1} b^{|j|(r - \frac{1}{p} + 1) q} \left( \sum_{m \in \Dd_j} b^{(-|j| - n) p} \right)^{\frac{q}{p}} \right)^{\frac{1}{q}} \\
& = c_3 \left( \sum_{j \in J_s^1} b^{(|j| r - n ) q} \right)^{\frac{1}{q}} \\
& \leq c_4 \left( \sum_{\lambda = 0}^n b^{(\lambda r - n) q} (\lambda + 1)^{s - 1} \right)^{\frac{1}{q}} \\
& \leq c_5 \, n^{\frac{s - 1}{q}} \, b^{-n} \left( \sum_{\lambda = 0}^n b^{\lambda r q} \right)^{\frac{1}{q}} \\
& \leq c_6 \, n^{\frac{d - 1}{q}} \, b^{n (r-1)}.
\end{align*}
From \eqref{prp_3_cs} in the same proposition (using the fact that $r - \frac{1}{p} < 0$ and $r - 1 \leq 0$) we have
\begin{align*}
\left( \sum_{j \in J_s^2} \Xi_j^q \right)^{\frac{1}{q}} & \leq c_7 \left( \sum_{j \in J_s^2} b^{|j|(r - \frac{1}{p} + 1) q} \, b^{n \frac{q}{p}} \, b^{(-|j| - n) q} \right)^{\frac{1}{q}} \\
& \quad + c_8 \left( \sum_{j \in J_s^2} b^{|j|(r - \frac{1}{p} + 1)q} \, b^{|j| \frac{q}{p}} \, b^{-2|j| q} \right)^{\frac{1}{q}} \\
& = c_7 \left( \sum_{j \in J_s^2} b^{\left[ |j| (r - \frac{1}{p}) + \frac{n}{p} - n \right] q} \right)^{\frac{1}{q}} \\
& \quad + c_8 \left( \sum_{j \in J_s^2} b^{|j|(r - 1)q} \right)^{\frac{1}{q}}\\
& \leq c_7 \left( \sum_{\lambda = n+1}^{s(n - 1)} (\lambda + 1)^{s-1} b^{\left[ \lambda(r - \frac{1}{p}) + \frac{n}{p} - n \right] q} \right)^{\frac{1}{q}} \\
& \quad + c_8 \left( \sum_{\lambda = n+1}^{s(n - 1)} (\lambda + 1)^{s-1} b^{\lambda (r-1) q} \right)^{\frac{1}{q}} \\
& \leq c_9 \, n^{\frac{s - 1}{q}} b^{\frac{n}{p} - n} \left( \sum_{\lambda = n + 1}^{s(n - 1)} b^{\lambda (r - \frac{1}{p}) q} \right)^{\frac{1}{q}} + c_{10} \, n^{\frac{s - 1}{q}} \left( \sum_{\lambda = n + 1}^{s(n - 1)} b^{\lambda (r - 1) q} \right)^{\frac{1}{q}} \\
& \leq c_{11} \, n^{\frac{s - 1}{q}} b^{\frac{n}{p} - n} \, b^{n (r - \frac{1}{p})} + c_{12} \, n^{\frac{s - 1}{q}} b^{n (r - 1)} \\
& \leq c_{13} \, n^{\frac{d - 1}{q}} \, b^{n(r-1)}.
\end{align*}
Part \eqref{prp_4_cs} in Proposition \ref{prp_haar_coeff_cs} gives us for any $1 \leq i \leq s$
\begin{align*}
\left( \sum_{j \in J_{s i}^3} \Xi_j^q \right)^{\frac{1}{q}} & \leq c_{14} \left( \sum_{j \in J_{s i}^3} b^{|j|(r - \frac{1}{p} + 1) q} \, b^{|j| \frac{q}{p}} \, b^{-2|j| q} \right)^{\frac{1}{q}}\\
& \leq c_{15} \left( \sum_{\lambda = n}^\infty (\lambda + 1)^{s-1} b^{\lambda(r - 1) q} \right)^{\frac{1}{q}}\\
& \leq c_{16} n^{\frac{d-1}{q}} b^{n(r-1)}.
\end{align*}

The cases $p = \infty$ and $q = \infty$ have to be modified in the usual way.

\end{proof}

\addcontentsline{toc}{chapter}{References}

\end{document}